\documentclass{article}

\usepackage{amsmath,amsthm,bm}
\usepackage{enumerate}
\usepackage{comment}
\usepackage{amssymb}
\usepackage{pgfplots}
\usepackage{subcaption}
\usepackage{graphicx}
\usepackage{url}
\usetikzlibrary{arrows,positioning,calc,patterns}

\newtheorem{thm}{Theorem}[section]
\newtheorem{cor}[thm]{Corollary}
\newtheorem{conj}[thm]{Conjecture}
\newtheorem{prop}[thm]{Proposition}
\newtheorem{lem}[thm]{Lemma}

\begin{document}

\title{On the Area of the Fundamental Region of a Binary Form Associated with Algebraic Trigonometric Quantities}

\author{Anton Mosunov}

\date{}

\maketitle

\begin{abstract}
Let $F(x, y)$ be a binary form of degree at least three and non-zero discriminant. We estimate the area $A_F$ bounded by the curve $|F(x, y)| = 1$ for four families of binary forms. The first two families that we are interested in are homogenizations of minimal polynomials of $2\cos\left(\frac{2\pi}{n}\right)$ and $2\sin\left(\frac{2\pi}{n}\right)$, which we denote by $\Psi_n(x, y)$ and $\Pi_n(x, y)$, respectively. The remaining two families of binary forms that we consider are homogenizations of Chebyshev polynomials of the first and second kinds, denoted $T_n(x, y)$ and $U_n(x, y)$, respectively.
\end{abstract}

\section{Introduction}
Let $F(x, y)$ be a binary form with real coefficients of degree $n \geq 3$ and non-zero discriminant $D_F$. The set
$$
\{(x, y) \in \mathbb R^2 \colon |F(x, y)| \leq 1\},
$$
which represents the collection of all points bounded by the curve $|F(x, y)| = 1$, is called the \emph{fundamental region} of $F$. It was proved by Mahler \cite{mahler33} that the area $A_F$ of the fundamental region of $F$ is finite.

In what follows, we restrict our attention to binary forms $F$ having integer coefficients. The quantity $A_F$, which can be evaluated via the formula \mbox{\cite[Section 3]{bean94}}
\begin{equation} \label{eq:AF-formula}
A_F = \int\limits_{-\infty}^{+\infty}\frac{dx}{|F(x, 1)|^{2/n}},
\end{equation}
plays a significant role in the analysis of certain Diophantine equations and inequalities associated with $F$. Mahler \cite{mahler33} proved that, for a positive \mbox{integer $h$}, the number of integer solutions $Z_F(h)$ to the \emph{Thue inequality} $|F(x, y)| \leq h$ satisfies
$$
\left|Z_F(h) - A_Fh^{2/n}\right| \ll_F h^{1/(n - 1)}.
$$
In 2019, Stewart and Xiao \cite{stewart-xiao} proved that the number of integers of absolute value at most $h$ which are represented by the form $F$ is asymptotic to $C_Fh^{2/n}$, where $C_F$ is a positive number which depends on $F$ and is a rational multiple of $A_F$.

The values $A_F$ for certain binary forms $F$ are intimately connected to the values of the \emph{beta function}
\begin{equation} \label{eq:beta-reg}
B(x, y) = \int\limits_0^{1}t^{x - 1}(1 - t)^{y - 1}dt,
\end{equation}
where $x$ and $y$ are complex numbers with positive real parts. An overview of important properties of $B(x, y)$ is given in Section \ref{sec:beta}. For a matrix $M = \left(\begin{smallmatrix}a & b\\c & d\end{smallmatrix}\right)$, with real coefficients, define
$$
F_M(x, y) = F(ax + by, cx + dy).
$$
Two forms $F$ and $G$ are said to be \emph{equivalent under} $\operatorname{GL}_2(\mathbb R)$ if $G = F_M$ for some $M \in \operatorname{GL}_2(\mathbb R)$. Analogously, we define the equivalence of forms under $\operatorname{GL}_2(\mathbb Z)$. In 1994, Bean \mbox{\cite[Corollary 1]{bean94}} proved that $A_F = 3B\left(\frac{1}{3}, \frac{1}{3}\right) = 15.8997\ldots$ when $F$ is a binary form that is equivalent under $\operatorname{GL}_2(\mathbb Z)$ to $xy(x - y)$. Furthermore, he proved that this value is the largest among all binary forms with integer coefficients, degree at least three and non-zero discriminant.

It was proved by Stewart and Xiao \cite[Corollary 1.3]{stewart-xiao} that if $a, b$ are fixed non-zero integers and $F_n(x, y) = ax^n + by^n$ is a binomial form of degree $n \geq 3$, then
\begin{equation} \label{eq:AFn}
A_{F_n} = \begin{cases}
\frac{1}{n|ab|^{1/n}}\left(2B\left(\frac{1}{n},1-\frac{2}{n}\right) +B\left(\frac{1}{n},\frac{1}{n}\right)\right) & \text{if $n$ is odd,}\\
\frac{2}{n|ab|^{1/n}}B\left(\frac{1}{n},\frac{1}{n}\right) & \text{if $ab$ is positive and $n$ is even,}\\
\frac{4}{n|ab|^{1/n}}B\left(\frac{1}{n},1-\frac{2}{n}\right) & \text{if $ab$ is negative and $n$ is even.}
\end{cases}
\end{equation}
In Section \ref{sec:binomial-form} we prove that $\lim\limits_{n \rightarrow \infty} A_{F_n} = 4$.

In 2019, Fouvry and Waldschmidt \cite[Th\'eor\`eme 1.5]{fouvry-waldschmidt} estimated $A_{\Phi_n}$ for cyclotomic binary forms $\Phi_n$ (note that a positive integer $n$ no longer refers to the degree of the form). It is a consequence of their result that for any $\varepsilon > 0$ there exists $n_0 = n_0(\varepsilon)$ such that for all $n \geq n_0$ the inequalities
$$
\left(2 - n^{-1 + \varepsilon}\right)^2 < A_{\Phi_n} < \left(2 + n^{-1 + \varepsilon}\right)^2
$$
are satisfied. Consequently, $\lim\limits_{n \rightarrow \infty} A_{\Phi_n} = 4$.

In this article we estimate $A_F$ for four families of binary forms. Let $\Psi_n(x)$ and $\Pi_n(x)$ denote the minimal polynomials of $2\cos\left(\frac{2\pi}{n}\right)$ and $2\sin\left(\frac{2\pi}{n}\right)$, respectively. The first two families that we are interested in are $\Psi_n(x, y)$ and $\Pi_n(x, y)$, which are homogenizations of $\Psi_n(x)$ and $\Pi_n(x)$, respectively. By \cite[Lemma]{watkins-zeitlin},
\begin{equation} \label{eq:Psin-formula}
\Psi_n(x, y) = \prod\limits_{\substack{1 \leq k < \frac{n}{2}\\\gcd(k, n) = 1}}\left(x - 2\cos\left(\frac{2\pi k}{n}\right)y\right).
\end{equation}
Further, since $\sin\left(\frac{2\pi}{n}\right) = \cos\left(\frac{2\pi(n - 4)}{4n}\right)$, it follows from (\ref{eq:Psin-formula}) that $\sin\left(\frac{2\pi}{n}\right)$ is an algebraic conjugate of $\cos\left(\frac{2\pi}{c(n)}\right)$, where $c(n)$ is the denominator of $\frac{n - 4}{4n}$ (in lowest terms). Consequently,
$$
\Pi_n(x, y) = \Psi_{c(n)}(x, y).
$$
See Figure \ref{fig:Psi13Pi9} for the graphs of $|\Psi_{13}(x, y)| = 1$ and $|\Pi_9(x, y)| = 1$. Note that in both cases the fundamental regions are \emph{not} compact.

\begin{figure}[t!]
\centering
\includegraphics[width=170pt]{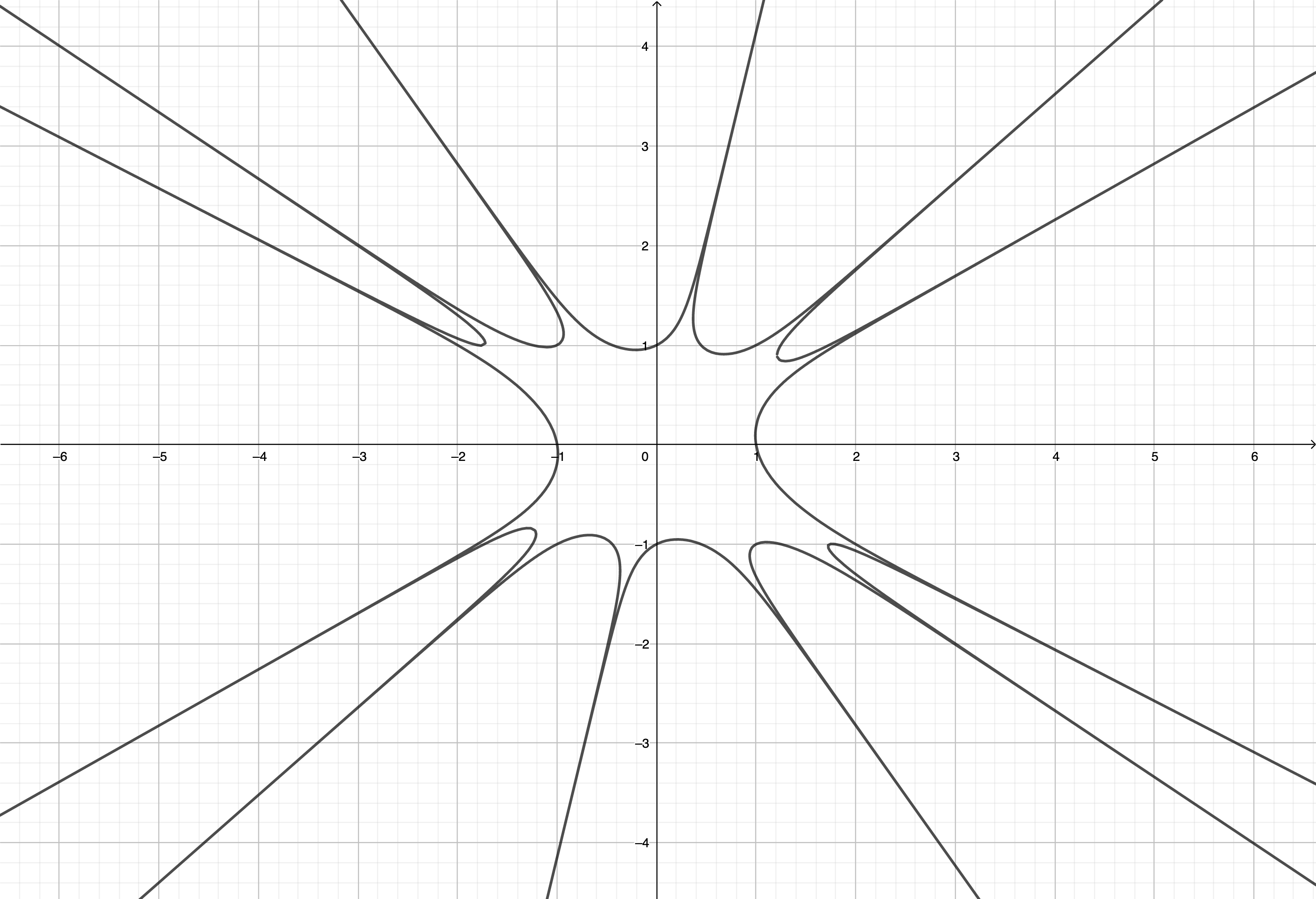}
\includegraphics[width=170pt]{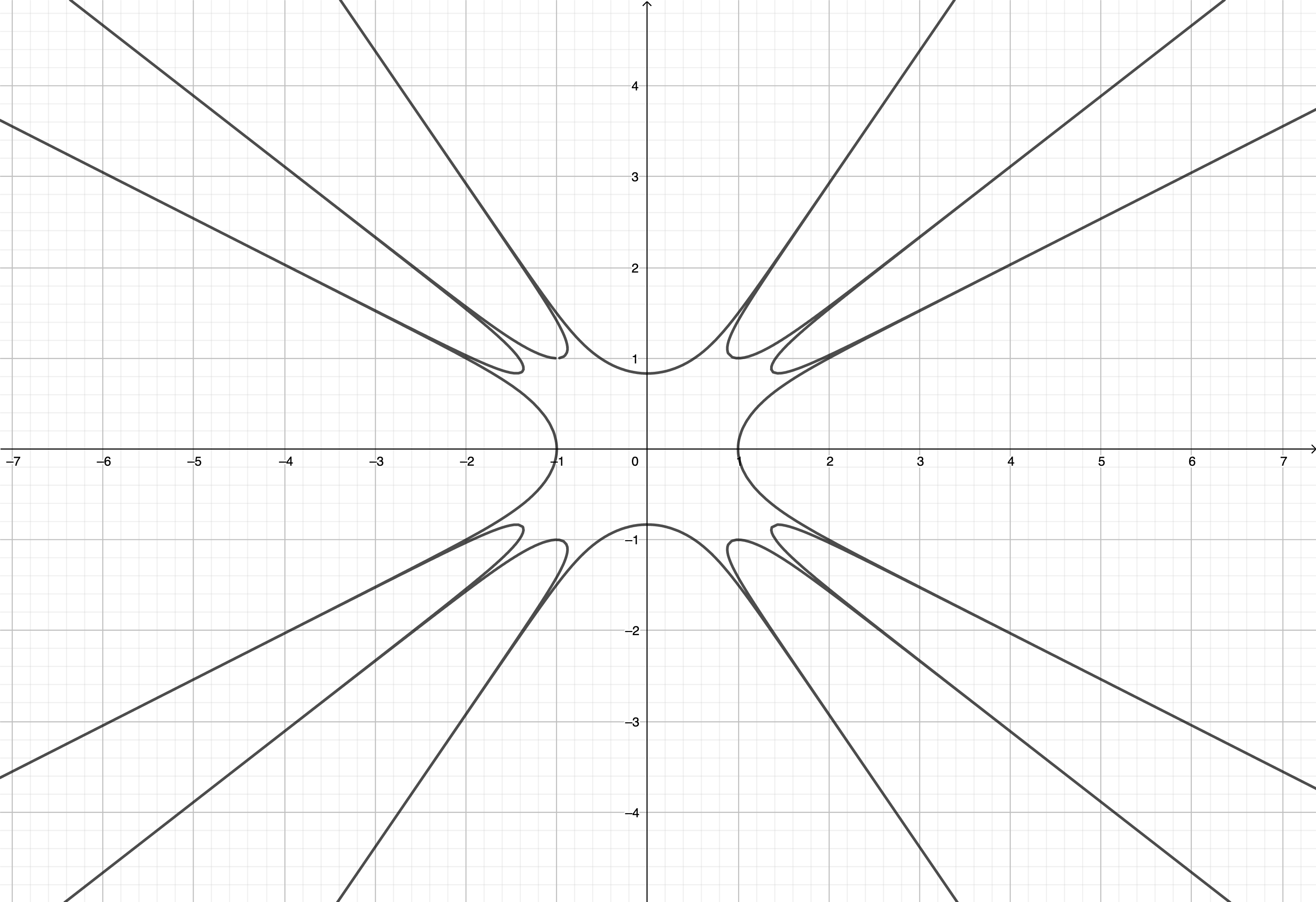}
\caption{Graphs of $|\Psi_{13}(x, y)| = 1$ (left) and $|\Pi_9(x, y)| = 1$ (right)}
\label{fig:Psi13Pi9}
\end{figure}

Next, let $T_n(x)$ and $U_n(x)$ denote Chebyshev polynomials of the first and second kinds, respectively. The other two families that we are interested in are $T_n(x, y)$ and $U_n(x, y)$, which are homogenizations of $T_n(x)$ and $U_n(x)$, respectively. It is known \cite{mason-handscomb} that
$$
T_n(x, y) = 2^{n - 1}\prod\limits_{k = 0}^{n-1}\left(x - \cos\left(\frac{(2k + 1)\pi}{2n}\right)y\right)
$$
and
$$
U_n(x, y) = 2^n\prod\limits_{k = 1}^n\left(x - \cos\left(\frac{k\pi}{n + 1}\right)y\right).
$$
See Figure \ref{fig:T6U6} for the graphs of $|T_6(x, y)| = 1$ and $|U_6(x, y)| = 1$. Note that in both cases the fundamental regions are \emph{not} compact.
%
It is also known that the binary forms $T_n(x, y)$ and $U_n(x, y)$ both have integer coefficients \cite{mason-handscomb}.

For a positive integer $n$, let $d(n)$ denote the number of its positive divisors. Let $\varphi(n)$ denote the Euler's totient function. Our results are summarized in Theorem \ref{thm:main3}, Corollary \ref{cor:main3}, Theorem \ref{thm:main1} and Theorem \ref{thm:main2}.

\newpage

\begin{thm} \label{thm:main3}
Let $n$ be a positive integer such that $\varphi(n) \geq 6$. Let $\Psi_n(x, y)$ denote the homogenization of the minimal polynomial of $2\cos\left(\frac{2\pi}{n}\right)$. Then
\small
\begin{align} \label{eq:APsin-bounds}
A_{\Psi_n} & > \frac{16}{3}\exp\left(-\frac{2d(n)\log n}{\varphi(n)}\right)\\
A_{\Psi_n} & < 2^{\frac{4}{\varphi(n)}}\exp\left(\frac{2d(n)^2\log n}{\varphi(n)}\right) \times\notag\\
& \times \left[\frac{16}{3} + 2^{1-\frac{4}{\varphi(n)}}B\left(\frac{1}{2},\frac{1}{2} - \frac{2}{\varphi(n)}\right) - 2\pi + \left(\frac{2}{n}B\left(\frac{1}{n}, 1 - \frac{4}{\varphi(n)}\right) - 2\right) - \left(\frac{2}{n}B\left(\frac{3}{n}, 1 - \frac{4}{\varphi(n)}\right) - \frac{2}{3}\right)\right].\notag
\end{align}
\normalsize
Furthermore, $\lim\limits_{n \rightarrow \infty} A_{\Psi_n} = \frac{16}{3}$.
\end{thm}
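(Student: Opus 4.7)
The plan is to start from the Joukowski-type identity
\[
\Psi_n(z+z^{-1}, 1) = z^{-\varphi(n)/2}\Phi_n(z), \qquad z \neq 0,
\]
obtained from \eqref{eq:Psin-formula} by pairing the root $2\cos(2\pi k/n)$ of $\Psi_n$ with the conjugate roots $e^{\pm 2\pi i k/n}$ of $\Phi_n$. Splitting the real line in \eqref{eq:AF-formula} into $[-2,2]$, $[2,\infty)$ and $(-\infty,-2]$ via $x = 2\cos\theta$ and $x = \pm 2\cosh t$ respectively, and using that $|\Psi_n(\pm 2\cosh t, 1)|^{4/\varphi(n)} = e^{-2t}|\Phi_n(\pm e^t)|^{4/\varphi(n)}$ and $2\sinh t\cdot e^{2t} = e^{3t}-e^t$, one obtains
\[
A_{\Psi_n} = \int_0^\pi \frac{2\sin\theta\,d\theta}{|\Phi_n(e^{i\theta})|^{4/\varphi(n)}} + \int_0^\infty \frac{(e^{3t}-e^t)\,dt}{|\Phi_n(e^t)|^{4/\varphi(n)}} + \int_0^\infty \frac{(e^{3t}-e^t)\,dt}{|\Phi_n(-e^t)|^{4/\varphi(n)}}.
\]

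To estimate each integrand I would use the M\"obius factorisation $\Phi_n(z) = \prod_{d\mid n}(z^d-1)^{\mu(n/d)}$. Writing $|e^{id\theta}-1| = 2|\sin(d\theta/2)|$, $e^{td}-1 = 2e^{td/2}\sinh(td/2)$, and $(-e^t)^d - 1$ case-by-case on the parity of $d$, and invoking $\sum_{d\mid n}\mu(n/d) = 0$ to cancel the leading exponentials, each integrand becomes a product of at most $d(n)$ trigonometric or hyperbolic factors raised to exponents $\pm 4/\varphi(n)$. The plan is then to isolate one \emph{critical} factor per integrand---the one driving the endpoint behaviour---and integrate it exactly against a beta integral, bounding the at most $d(n)$ remaining factors uniformly via the divisor inequality $\prod_{d\mid n} d \le n^{d(n)}$. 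On $[-2,2]$, the substitution $u = \sin(\theta/2)$ applied to the $d=1$ factor produces $2^{1-4/\varphi(n)} B(\tfrac12, \tfrac12-\tfrac{2}{\varphi(n)})$; on each hyperbolic ray, the substitution $s = e^{-tn}$ applied to the $d=n$ factor produces $(2/n) B(k/n, 1-4/\varphi(n))$ with $k \in \{1, 3\}$ (the two values of $k$ arising from the two exponentials in $e^{3t}-e^t$). The crude bounds on the non-critical factors generate the correction factor $\exp(-2 d(n)\log n/\varphi(n))$ in the lower bound and $\exp(2 d(n)^2\log n/\varphi(n))$ in the upper bound; the asymmetry reflects the greater care needed to bound $|\Psi_n|^{-4/\varphi(n)}$ from above near the roots of $\Psi_n$.

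For the limit, the standard estimates $d(n) = n^{o(1)}$ and $\varphi(n) \gg n/\log\log n$ force $d(n)^2\log n/\varphi(n) \to 0$, so both exponential prefactors in \eqref{eq:APsin-bounds} tend to $1$. Inside the bracket of the upper bound, $2^{1-4/\varphi(n)} B(\tfrac12, \tfrac12-\tfrac{2}{\varphi(n)}) \to 2 B(\tfrac12, \tfrac12) = 2\pi$ cancels the explicit $-2\pi$, and $(2/n) B(k/n, 1-4/\varphi(n)) \to 2/k$ (via $\Gamma(k/n) \sim n/k$) cancels $-2$ and $-2/3$ for $k = 1, 3$; what remains is $16/3$, and the two bounds then squeeze $A_{\Psi_n}$ to this value. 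The main anticipated obstacle is the bookkeeping needed to keep the crude bound on the $d(n)$ non-critical factors compatible with the integrability of the critical factor near the endpoints, so that the total loss remains within the stated exponential envelope rather than exploding.
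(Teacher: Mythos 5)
Your setup---splitting $\mathbb{R}$ at $\pm 2$, substituting $x = 2\cos\theta$ and $x = \pm 2\cosh t$, and invoking the identity $\Psi_n(z + z^{-1}) = z^{-\varphi(n)/2}\Phi_n(z)$ to reduce everything to the cyclotomic polynomial---is exactly the paper's, and your identification of the beta integrals $2^{-\alpha}B\left(\frac{1-\alpha}{2},\frac{1}{2}\right)$ and $\frac{1}{n}B\left(\frac{k}{n}, 1-\alpha\right)$ with $k \in \{1,3\}$ as coming from the factor $z^n - 1$ is correct (though on $[-2,2]$ the critical factor is $e^{in\theta}-1$, i.e.\ $d = n$, not $d = 1$; the substitution that produces $B\left(\frac{1}{2},\frac{1}{2}-\frac{2}{\varphi(n)}\right)$ is $\theta \mapsto n\theta/2$ applied to $|\sin(n\theta/2)|^{-\alpha}$). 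The genuine gap is in how you control the remaining factors. After the M\"obius factorisation $\Phi_n(z) = \prod_{d\mid n}(z^d-1)^{\mu(n/d)}$, the non-critical factors are \emph{not} uniformly bounded: whenever $\omega(n) \geq 2$ there are proper divisors $d$ with $\mu(n/d) = +1$, whose factors $|e^{id\theta}-1|^{-\alpha}$ blow up at non-primitive roots of unity, and on the hyperbolic rays the factors with $\mu(n/d) = -1$ contribute $\sinh(td/2)^{\alpha}$, which grows exponentially in $t$. These singularities and growths cancel only against one another, so ``isolate one critical factor and bound the rest uniformly'' does not go through, and the divisor inequality $\prod_{d\mid n} d \leq n^{d(n)}$ does not control any of these trigonometric or hyperbolic quantities. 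Worse, the factorisation cannot produce the lower bound at all: to prove $A_{\Psi_n} > \frac{16}{3}\exp\left(-\frac{2d(n)\log n}{\varphi(n)}\right)$ you need an \emph{upper} bound on $|\Phi_n(z)|$, and the M\"obius product contains factors $|z^d-1|^{-1}$ that cannot be bounded above near $d$-th roots of unity.

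The paper's mechanism is different: it works with coefficient norms rather than the M\"obius product. Writing $L(f)$ for the sum of the absolute values of the coefficients of $f$, one has $|\Phi_m(z)| \leq L(\Phi_m) \leq m^{d(m)/2}$ for all $|z| \leq 1$ (the bound of Fouvry and Waldschmidt). This yields the lower bound directly, since $|\Phi_n(z)|^{-\alpha} \geq n^{-\alpha d(n)/2}$ on the closed unit disk. For the upper bound one writes $\Phi_n(z)^{-1} = \frac{z-1}{z^n-1}\prod_{m\mid n,\, 1<m<n}\Phi_m(z)$, so that the single singular factor $(z^n-1)^{-1}$ is isolated and every remaining factor is a \emph{polynomial}, bounded on the closed unit disk by its $L$-norm, with $\prod_{m\mid n, m\neq 1}L(\Phi_m) \leq \exp\left(\frac{d(n)^2\log n}{2}\right)$ accounting for the $d(n)^2$ in the exponent of the stated upper bound. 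For this to apply you must evaluate $\Phi_n$ \emph{inside} the unit disk, i.e.\ substitute $x = \pm e^{-\theta}$ on the rays rather than $x = \pm e^{t}$ as you propose; with your choice $|z| > 1$ and the bound $|\Phi_m(z)| \leq L(\Phi_m)$ is unavailable. Your treatment of the limit $A_{\Psi_n} \to \frac{16}{3}$ via $d(n) = n^{o(1)}$ and $\varphi(n) \gg n/\log\log n$ is fine once the two bounds are in hand.
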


\begin{figure}[t!]
\centering
\includegraphics[width=170pt]{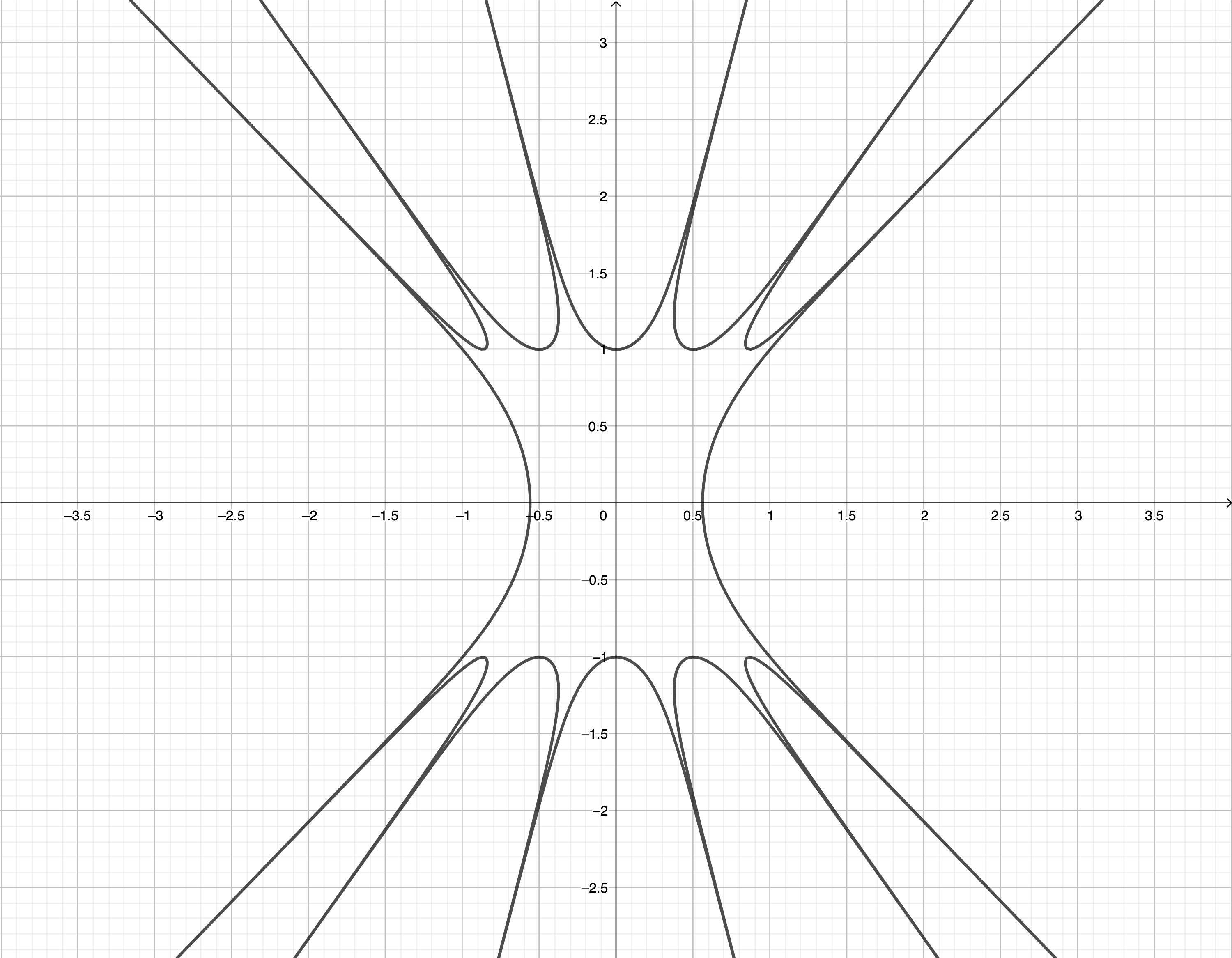}
\includegraphics[width=170pt]{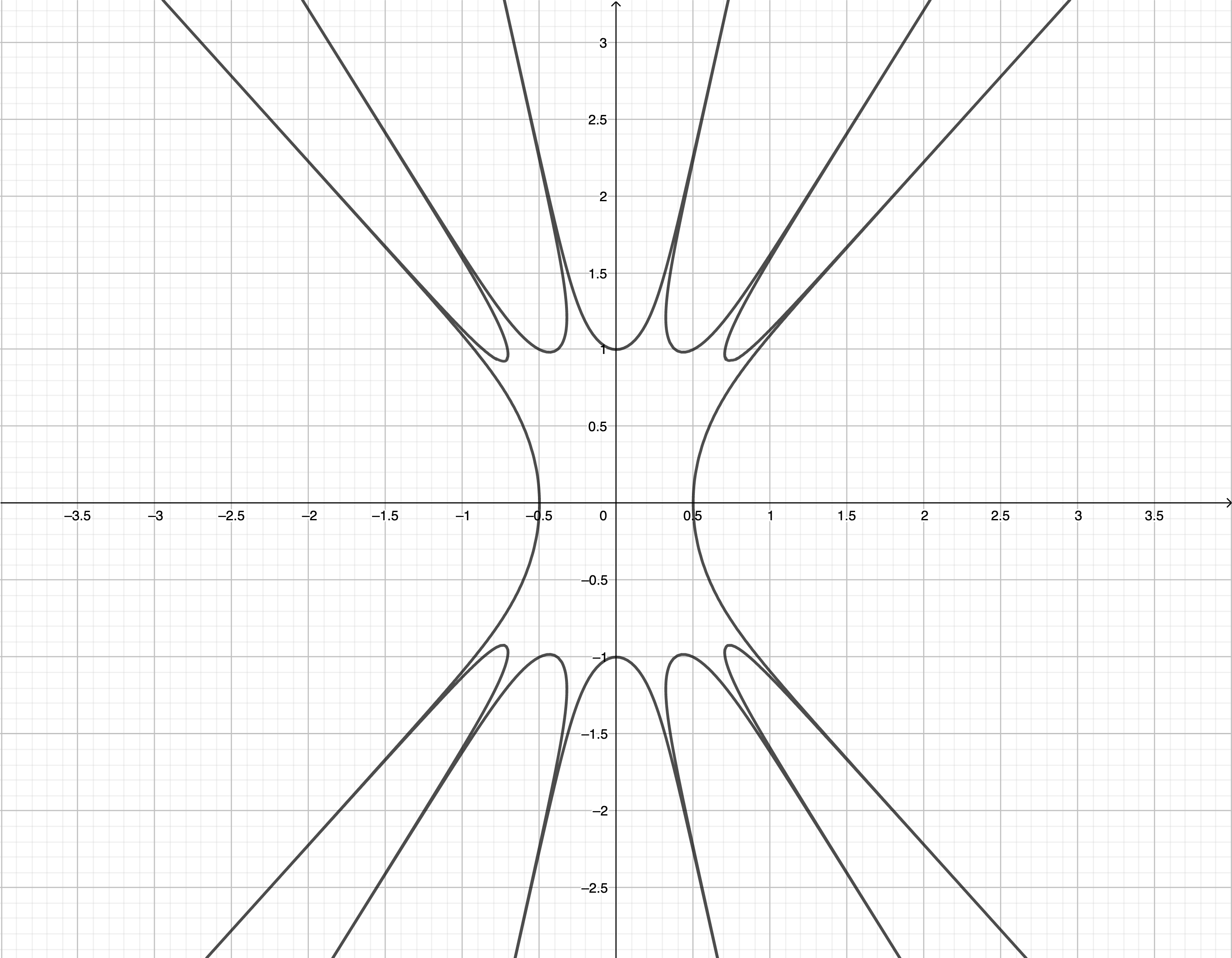}
\caption{Graphs of $|T_6(x, y)| = 1$ (left) and $|U_6(x, y)| = 1$ (right)}
\label{fig:T6U6}
\end{figure}

\begin{cor} \label{cor:main3}
Let $n$ be a positive integer. Let $\Pi_n(x, y)$ denote the homogenization of the minimal polynomial of $2\sin\left(\frac{2\pi}{n}\right)$. Then $\Pi_n(x, y) = \Psi_{c(n)}(x, y)$, where
$$
c(n) =
\begin{cases}
4n & \text{if $n$ is odd,}\\
2n & \text{if $n \equiv 2 \pmod 4$,}\\
n & \text{if $n \equiv 0 \pmod 8$,}\\
n/2 & \text{if $n \equiv 12 \pmod{16}$,}\\
n/4 & \text{if $n \equiv 4 \pmod{16}$.}
\end{cases}
$$
Furthermore, $\lim\limits_{n \rightarrow \infty} A_{\Pi_n} = \frac{16}{3}$.
\end{cor}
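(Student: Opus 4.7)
The identity $\Pi_n(x,y) = \Psi_{c(n)}(x,y)$, where $c(n)$ is the denominator of $(n-4)/(4n)$ in lowest terms, was already established in the discussion preceding Theorem \ref{thm:main3}. The first task is therefore to make this denominator explicit. I would write $c(n) = 4n/\gcd(n-4,\, 4n)$ and apply the simplification
$$
\gcd(n-4,\, 4n) = \gcd\bigl(n-4,\; 4n - 4(n-4)\bigr) = \gcd(n-4,\, 16),
$$
which reduces the computation to a short case analysis according to $n$ modulo powers of $2$. When $n$ is odd the gcd equals $1$; when $n \equiv 2 \pmod 4$ it equals $2$; when $n \equiv 0 \pmod 8$ (so $n-4 \equiv 4 \pmod 8$) it equals $4$; and when $n \equiv 4 \pmod 8$ one must further separate $n \equiv 4 \pmod{16}$ (where $n-4 \equiv 0 \pmod{16}$, giving gcd $16$) from $n \equiv 12 \pmod{16}$ (where $n-4 \equiv 8 \pmod{16}$, giving gcd $8$). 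Dividing $4n$ by these five values reproduces the piecewise formula in the statement.

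For the limit, the key observation is that the piecewise formula yields $c(n) \geq n/4$ in every case, hence $c(n) \to \infty$ as $n \to \infty$. Since $\varphi(m) \to \infty$ with $m$, the hypothesis $\varphi(c(n)) \geq 6$ of Theorem \ref{thm:main3} is satisfied for all sufficiently large $n$. Applying that theorem with $c(n)$ in place of $n$ and taking the limit in the upper and lower bounds of (\ref{eq:APsin-bounds}) gives $\lim_{n\to\infty} A_{\Psi_{c(n)}} = 16/3$. Since $A_{\Pi_n} = A_{\Psi_{c(n)}}$ by the identity of forms, the same limit holds for $A_{\Pi_n}$.

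The argument is essentially bookkeeping once Theorem \ref{thm:main3} is in hand, and I do not anticipate any serious obstacle. The only step requiring some care is tracking the residues modulo $16$ to obtain each case of $c(n)$; beyond that, the limit statement is an immediate consequence of Theorem \ref{thm:main3} evaluated along the subsequence $m = c(n)$, with the uniform bound $c(n) \geq n/4$ ensuring that this subsequence tends to infinity.
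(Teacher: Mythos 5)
Your proposal is correct and follows essentially the same route as the paper: both reduce the fraction $\frac{n-4}{4n}$ to lowest terms via a case analysis on $n$ modulo $16$ (the paper cites Niven for the five cases, while you make the arithmetic explicit through $\gcd(n-4,4n)=\gcd(n-4,16)$) and then deduce the limit from Theorem \ref{thm:main3} using the fact that $c(n)\to\infty$. Your observation that $c(n)\geq n/4$ guarantees the hypothesis $\varphi(c(n))\geq 6$ for large $n$ is a slightly more careful justification of the final step than the paper's one-line conclusion.
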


\begin{thm} \label{thm:main1}
Let $n$ be a positive integer such that $n \geq 3$. Let $T_n(x, y)$ denote the homogenization of the Chebyshev polynomial of the first kind $T_n(x)$. Then
\begin{equation} \label{eq:ATn-bounds}
\frac{8}{3} < A_{T_n} < \frac{8}{3} + \frac{2}{3}\left(\sqrt[n]{4} - 1\right) + B\left(\frac{1}{2} - \frac{1}{n}, \frac{1}{2}\right) - \pi
\end{equation}
Furthermore, $\lim\limits_{n \rightarrow \infty} A_{T_n} = \frac{8}{3}$.
\end{thm}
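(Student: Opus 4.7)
My plan is to apply the integral formula (\ref{eq:AF-formula}) to $T_n$, exploiting the classical identities $T_n(\cos\theta)=\cos(n\theta)$ and $T_n(\cosh t)=\cosh(nt)$ together with the symmetry $|T_n(-x)|=|T_n(x)|$. This decomposes $A_{T_n}$ as
$$A_{T_n}=\int_{-1}^{1}\frac{dx}{|T_n(x)|^{2/n}}+2\int_{1}^{\infty}\frac{dx}{|T_n(x)|^{2/n}}=I_n+2J_n,$$
where after the substitutions $x=\cos\theta$ on the first integral and $x=\cosh t$ on the second,
$$I_n=\int_0^{\pi}\frac{\sin\theta\,d\theta}{|\cos(n\theta)|^{2/n}},\qquad J_n=\int_0^{\infty}\frac{\sinh t\,dt}{\cosh(nt)^{2/n}}.$$
Everything then reduces to bounding these two one-variable integrals.

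For $J_n$ I would use the elementary sandwich $e^{nt}/2\le\cosh(nt)\le e^{nt}$, which gives $e^{-2t}\le\cosh(nt)^{-2/n}\le\sqrt[n]{4}\,e^{-2t}$. Combined with the computation $\int_0^\infty e^{-2t}\sinh t\,dt=1/3$, this yields $1/3<J_n<\sqrt[n]{4}/3$, with strictness on the left because $\cosh(nt)<e^{nt}$ for $t>0$. This produces the $2/3$ in the lower bound of (\ref{eq:ATn-bounds}) and the $2\sqrt[n]{4}/3$ in the upper bound.

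The harder step is $I_n$. For the lower bound, the pointwise inequality $|\cos(n\theta)|\le 1$ immediately gives $I_n>\int_0^\pi\sin\theta\,d\theta=2$, so combined with $J_n>1/3$ we obtain $A_{T_n}>8/3$. For the upper bound, the naive estimate $\sin\theta\le 1$ is too weak: it produces $B(1/2-1/n,1/2)$, which overshoots the target by exactly $\pi-2$. The trick I would use is the rearrangement identity
$$\int_0^\pi\sin\theta\,d\theta+\int_0^\pi\left(\frac{1}{|\cos(n\theta)|^{2/n}}-1\right)d\theta-I_n=\int_0^\pi(1-\sin\theta)\left(\frac{1}{|\cos(n\theta)|^{2/n}}-1\right)d\theta,$$
whose right-hand side is strictly positive because both factors in the integrand are non-negative on $[0,\pi]$ and strictly positive on a set of positive measure. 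Using $\phi=n\theta$, the $\pi$-periodicity of $|\cos|$, and the standard beta integral $\int_0^{\pi/2}\cos^{-2/n}\phi\,d\phi=\tfrac{1}{2}B(1/2,1/2-1/n)$, the middle integral evaluates to $B(1/2-1/n,1/2)-\pi$. Hence $I_n<2+B(1/2-1/n,1/2)-\pi$, and combining with the bound for $J_n$ yields precisely (\ref{eq:ATn-bounds}).

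The limit follows by squeezing: $\sqrt[n]{4}-1\to 0$ and $B(1/2-1/n,1/2)\to B(1/2,1/2)=\pi$ by continuity of the beta function on its domain of convergence, so the upper bound collapses to $8/3$, matching the lower bound. I expect the main obstacle to be spotting the rearrangement identity for $I_n$, since without it the error term persists with a fixed gap of $\pi-2$ even as $n\to\infty$, and the clean formulation of (\ref{eq:ATn-bounds}) would be impossible.
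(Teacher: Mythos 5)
Your proposal is correct and follows essentially the same route as the paper: the same split of the integral at $x = \pm 1$, the same sandwich $e^{nt}/2 < \cosh(nt) < e^{nt}$ for the outer piece (the paper phrases it as $\tfrac{1}{2}(x+\sqrt{x^2-1})^n < T_n(x) < (x+\sqrt{x^2-1})^n$ and substitutes $x=\csc\theta$ rather than $x=\cosh t$), and the same add-and-subtract device for the inner piece --- your ``rearrangement identity'' is exactly the paper's step of writing $C_2 = 2 + \varepsilon_n$ and bounding $\varepsilon_n = \frac{2}{n}\int_0^{\pi n/2}\sin(\theta/n)\left(|\cos\theta|^{-2/n}-1\right)d\theta$ by dropping the factor $\sin(\theta/n)\le 1$ against the non-negative deficit before reducing to the beta integral. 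All the estimates and the final squeeze check out, so this is a valid proof of the theorem.
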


\begin{thm} \label{thm:main2}
Let $n$ be a positive integer such that $n \geq 3$. Let $U_n(x, y)$ denote the homogenization of the Chebyshev polynomial of the second kind $U_n(x)$. Then
\begin{align} \label{eq:AUn-bounds}
A_{U_n} & > \frac{8}{3} + \left(B\left(1 + \frac{1}{n}, \frac{1}{2}\right) - 2\right) + \frac{2}{3}\left((n + 1)^{-2/n} - 1\right)\\
A_{U_n} & < \frac{8}{3} + \left(B\left(1 + \frac{1}{n}, \frac{1}{2}\right) - 2\right) + B\left(\frac{1}{2} - \frac{1}{n}, \frac{1}{2}\right) - \pi\notag
\end{align}
Furthermore, $\lim\limits_{n \rightarrow \infty} A_{U_n} = \frac{8}{3}$.
\end{thm}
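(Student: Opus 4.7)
The plan is to use formula (\ref{eq:AF-formula}) and split the integral at the boundary $|x|=1$ of the interval containing the zeros of $U_n$. Since $U_n(-x,1) = (-1)^n U_n(x,1)$, symmetry gives
$$
A_{U_n} = 2I_1 + I_2,\qquad I_1 = \int_1^\infty \frac{dx}{U_n(x,1)^{2/n}},\qquad I_2 = \int_{-1}^1 \frac{dx}{|U_n(x,1)|^{2/n}}.
$$
For $I_1$, I would parametrize $x \geq 1$ by $y \geq 1$ via $x = (y+y^{-1})/2$, so that
$$
U_n(x,1) = \frac{y^{n+1} - y^{-n-1}}{y - y^{-1}} = y^n + y^{n-2} + \cdots + y^{-n}.
$$
Each of the $n+1$ positive summands lies in $(0,y^n]$, which gives $y^n \leq U_n(x,1) \leq (n+1)y^n$. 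Combined with the direct computation $\int_1^\infty y^{-2}\,dx = 1/3$ (change of variable to $y$), this yields
$$
\frac{2}{3(n+1)^{2/n}} \leq 2I_1 \leq \frac{2}{3}.
$$

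For $I_2$, the substitution $x = \cos\theta$ and the identity $U_n(\cos\theta) = \sin((n+1)\theta)/\sin\theta$ give
$$
I_2 = \int_0^\pi \frac{\sin^{1+2/n}\theta}{|\sin((n+1)\theta)|^{2/n}}\,d\theta.
$$
The lower bound $I_2 \geq \int_0^\pi \sin^{1+2/n}\theta\,d\theta = B(1+\tfrac{1}{n},\tfrac{1}{2})$ follows at once from $|\sin((n+1)\theta)| \leq 1$ and the standard Wallis-type identity; added to $2I_1 \geq \tfrac{2}{3}(n+1)^{-2/n}$ this reproduces the first line of (\ref{eq:AUn-bounds}). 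For the upper bound I would write $I_2 = B(1+\tfrac{1}{n},\tfrac{1}{2}) + E$ with
$$
E = \int_0^\pi \sin^{1+2/n}\theta\bigl(|\sin((n+1)\theta)|^{-2/n} - 1\bigr)\,d\theta,
$$
partition $[0,\pi]$ by the points $\theta_k = k\pi/(n+1)$, and on $[\theta_k,\theta_{k+1}]$ substitute $\phi = (n+1)\theta - k\pi \in [0,\pi]$, noting that $|\sin(\phi + k\pi)| = \sin\phi$. Interchanging the finite sum and the integral gives
$$
E = \int_0^\pi (\sin^{-2/n}\phi - 1)\,h_n(\phi)\,d\phi,\qquad h_n(\phi) = \frac{1}{n+1}\sum_{k=0}^n \sin^{1+2/n}\!\left(\frac{\phi + k\pi}{n+1}\right).
$$
Each summand of $h_n$ lies in $[0,1]$, so $h_n(\phi) \leq 1$ pointwise; combined with the identity $\int_0^\pi (\sin^{-2/n}\phi - 1)\,d\phi = B(\tfrac{1}{2} - \tfrac{1}{n},\tfrac{1}{2}) - \pi$, this gives $E \leq B(\tfrac{1}{2} - \tfrac{1}{n},\tfrac{1}{2}) - \pi$, and together with $2I_1 \leq \tfrac{2}{3}$ yields the second line of (\ref{eq:AUn-bounds}).

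The limit then follows from $(n+1)^{-2/n} \to 1$, $B(1+\tfrac{1}{n},\tfrac{1}{2}) \to B(1,\tfrac{1}{2}) = 2$, and $B(\tfrac{1}{2}-\tfrac{1}{n},\tfrac{1}{2}) \to B(\tfrac{1}{2},\tfrac{1}{2}) = \pi$, so both sides of (\ref{eq:AUn-bounds}) converge to $\tfrac{2}{3} + 2 = \tfrac{8}{3}$. The main obstacle I expect is the upper bound on $I_2$: the integrand spikes like $|\sin((n+1)\theta)|^{-2/n}$ at each of the $n$ interior zeros of $U_n$, and without the subinterval decomposition followed by the averaging estimate $h_n \leq 1$ it is not obvious how to match the excess $E$ against the clean beta integral $B(\tfrac{1}{2} - \tfrac{1}{n},\tfrac{1}{2}) - \pi$; once that step is in place, the rest is routine.
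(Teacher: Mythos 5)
Your proposal is correct and follows essentially the same route as the paper's proof (Lemma \ref{lem:chebyshev2-area}): split at $x=\pm 1$, bound $U_n$ between $y^n$ and $(n+1)y^n$ on $[1,\infty)$ (the paper's inequality (\ref{eq:Un-bound}) in the variable $x=\csc\theta$ rather than $x=(y+y^{-1})/2$), and on $[-1,1]$ isolate the excess $E$ (the paper's $\varepsilon_n$) and bound it by dropping the factor $\sin^{1+2/n}$ and invoking the periodicity of $|\sin((n+1)\theta)|$. Your subinterval decomposition with the averaged function $h_n\leq 1$ is the same estimate as the paper's pointwise bound followed by periodicity, and working directly at $\alpha=2/n$ instead of general $\alpha\in[2/n,1)$ is only a cosmetic difference.
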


Theorem \ref{thm:main3} follows from Lemma \ref{lem:psi-area}, which we establish in Section \ref{sec:Psi}. The proof of Corollary \ref{cor:main3} is outlined in Section \ref{sec:pi}. Theorem \ref{thm:main1} follows from Lemma \ref{lem:chebyshev1-area}, which we establish in Section \ref{sec:Tn}. Theorem \ref{thm:main2} follows from Lemma \ref{lem:chebyshev2-area}, which we establish in Section \ref{sec:Un}.

Our last result, which we present in Section \ref{sec:bean}, concerns the quantity
$$
Q(F) = |D_F|^{1/n(n - 1)}A_F
$$
associated to a binary form $F$ of degree $n \geq 3$ and non-zero discriminant $D_F$. It was demonstrated by Bean \cite{bean94} that the quantities $A_F$ and $Q(F)$ remain invariant for all binary forms that are equivalent under $\operatorname{GL}_2(\mathbb Z)$ to $F$. Unlike $A_F$, the quantity $Q(F)$ also remains invariant for all binary forms that, up to multiplication by a complex number, are equivalent under $\operatorname{GL}_2(\mathbb R)$ to $F$. An important conjecture about $Q(F)$ was formulated by Bean \mbox{\cite[Conjecture 1]{bean95}}. Let
$$
F_n^*(x, y) = \prod\limits_{k = 1}^n\left(\sin\left(\frac{k\pi}{n}\right)x - \cos\left(\frac{k\pi}{n}\right)y\right).
$$

\begin{conj} \label{conj:bean}
The maximum value $M_n$ of $Q(F)$ over the forms $F$ with complex coefficients of degree $n$ with discriminant $D_F \neq 0$ is attained precisely when $F$ is a form which, up to multiplication by a complex number, is equivalent under $\operatorname{GL}_2(\mathbb R)$ to the form $F_n^*$. That is, $M_n = Q(F_n^*)$. Moreover, the limit of the sequence $\{M_n\}$ is $2\pi$.
\end{conj}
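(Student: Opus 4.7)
The plan is to split Conjecture~\ref{conj:bean} into an explicit evaluation of $Q(F_n^*)$ and a maximality argument showing $M_n = Q(F_n^*)$. The first step is a direct calculation; the second is the source of difficulty and the reason the statement remains open. Since the $k = n$ factor of $F_n^*$ is $y$, we have $F_n^*(x, 1) = \prod_{k=1}^{n-1}(\sin(k\pi/n)\,x - \cos(k\pi/n))$. The substitution $x = \cot\theta$ with $\theta \in (0, \pi)$ converts each factor to $\sin(k\pi/n - \theta)/\sin\theta$, and the product identity $\prod_{k=0}^{n-1}\sin(\phi + k\pi/n) = \sin(n\phi)/2^{n-1}$ collapses the product to $F_n^*(\cot\theta, 1) = \sin(n\theta)/(2^{n-1}\sin^n\theta)$. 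Inserting this into (\ref{eq:AF-formula}) and exploiting the $\pi$-periodicity of $|\sin|$ yields $A_{F_n^*} = 2^{2(n-1)/n}\,B(1/2 - 1/n,\, 1/2)$. The discriminant, computed via $D_F = \prod_{i < j}(a_i b_j - a_j b_i)^2$ with $a_k = \sin(k\pi/n)$ and $b_k = -\cos(k\pi/n)$, so that $a_i b_j - a_j b_i = \sin((j-i)\pi/n)$, reduces to $|D_{F_n^*}| = n^n/2^{n(n-1)}$ using $\prod_{d=1}^{n-1}\sin(d\pi/n) = n/2^{n-1}$. Multiplying gives
\[
Q(F_n^*) \;=\; n^{1/(n-1)} \cdot 2^{(n-2)/n} \cdot B\!\left(\tfrac{1}{2} - \tfrac{1}{n},\, \tfrac{1}{2}\right) \;\longrightarrow\; 2\pi,
\]
which already proves $\liminf_n M_n \geq 2\pi$ and establishes the claimed limit conditional on the maximality step.

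For the maximality statement $M_n = Q(F_n^*)$, I would proceed in three stages. First, exploit the $\operatorname{GL}_2(\mathbb{R})$-invariance of $Q$ modulo scalar multiplication to normalize three of the roots of $F$ on $\mathbb{CP}^1$. Second, argue that the supremum is attained only at totally real forms (all $n$ roots on $\mathbb{RP}^1$), by analyzing the joint dependence of $A_F$ and $|D_F|^{1/n(n-1)}$ on the imaginary parts of complex-conjugate root pairs. Third, parameterize totally real forms by $n$ points on $\mathbb{RP}^1 \cong S^1$ and show that the equidistributed configuration $\{k\pi/n\}_{k=1}^n$ is the unique critical point of $\log Q$ modulo $\operatorname{PGL}_2(\mathbb{R})$, by a balance-of-forces argument reminiscent of the minimization of logarithmic energy on the circle.

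The main obstacle is promoting this critical point to a \emph{global} maximum. Whereas $|D_F|$ is a polynomial function of the coefficients with a well-developed variational theory, $A_F$ is a transcendental integral that resists uniform comparison with $|D_F|^{1/n(n-1)}$: none of Bean's inequality \cite{bean94}, the Fouvry--Waldschmidt estimates \cite{fouvry-waldschmidt}, or the Stewart--Xiao asymptotics \cite{stewart-xiao} yields sharp two-sided control on $A_F$ across all root configurations. A natural approach would be to interpret $\log Q(F)$ as a free-energy functional on $n$-point configurations in $\mathbb{CP}^1$ and to exhibit concavity or a subharmonicity principle along well-chosen deformations, in analogy with the logarithmic equilibrium measure on $\mathbb{RP}^1$; making this rigorous appears to require genuinely new ideas, which is why Conjecture~\ref{conj:bean} remains open.
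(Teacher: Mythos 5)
The statement you were asked to prove is Bean's conjecture, which the paper records as Conjecture \ref{conj:bean} precisely because it is open: the paper contains no proof of it, only supporting evidence (Proposition \ref{prop:bean}, which shows $Q(T_n)\leq Q(S_n)=Q(F_n^*)$ and $Q(U_n)\leq Q(S_n)$ for $n\geq 3$, and computes the limits of $Q$ along the four families studied, all equal to $\tfrac{16}{3}<2\pi$). So there is no proof in the paper to compare yours against, and your proposal --- correctly --- does not claim to close the conjecture. The part of your write-up that is an actual argument checks out: the substitution $x=\cot\theta$ combined with $\prod_{k=0}^{n-1}\sin(\phi+k\pi/n)=2^{1-n}\sin(n\phi)$ does give $A_{F_n^*}=2^{2(n-1)/n}B\left(\tfrac12-\tfrac1n,\tfrac12\right)$, and the discriminant computation via $\prod_{1\leq i<j\leq n}\sin^2\left(\tfrac{(j-i)\pi}{n}\right)=\left(n/2^{n-1}\right)^n$ gives $|D_{F_n^*}|^{1/n(n-1)}=\tfrac12 n^{1/(n-1)}$, so your expression $Q(F_n^*)=2^{1-2/n}n^{1/(n-1)}B\left(\tfrac12-\tfrac1n,\tfrac12\right)\to 2\pi$ is exactly the Bean--Laugesen formula (\ref{eq:QFn}) that the paper cites rather than reproves.

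The remainder of your proposal is a research program, not a proof, and you flag it as such; I will only underline two points so the status is unambiguous. First, your computation establishes $M_n\geq Q(F_n^*)$ and hence only $\liminf_n M_n\geq 2\pi$; the clause ``$\lim M_n=2\pi$'' of the conjecture requires an upper bound on $Q(F)$ uniform over \emph{all} degree-$n$ forms with nonzero discriminant, which is precisely the hard direction (Bean's extremality result for $3B(\tfrac13,\tfrac13)$ concerns $A_F$ for integral forms, not $Q$ over complex forms). Second, each of your three stages --- reduction to totally real forms, identification of the equidistributed configuration as the unique critical point, and promotion to a global maximum --- is an open problem in its own right; the ``balance-of-forces'' heuristic controls $|D_F|$ (a logarithmic energy) but not the transcendental integral $A_F$, and it is exactly this mismatch that has kept the conjecture open. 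In short: your evaluation of $Q(F_n^*)$ is correct and consistent with the paper, and the rest cannot be graded as a proof because no proof exists --- in the paper or elsewhere.
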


It was proved by Bean and Laugesen \cite[Theorem 1]{bean-laugesen} that
\begin{equation} \label{eq:QFn}
Q(F_n^*) = 2^{1 - 2/n}n^{1/(n - 1)}B\left(\frac{1}{2} - \frac{1}{n}, \frac{1}{2}\right)
\end{equation}
and that $\lim\limits_{n \rightarrow \infty} Q(F_n^*) = 2\pi$. Let $\nu_2(n)$ denote the $2$-adic order of $n$. In \cite{mosunov} it was proved by the author that the binary form
$$
S_n(x, y) = 2^{n - 1 - \nu_2(n)}F_n^*(x, y)
$$
has integer coefficients and that the greatest common divisor of its coefficients is equal to one.\footnote{Note that in \cite[Proposition 1]{mosunov} where the coefficients of $F_n^*$ and $S_n$ are determined there should be no $Y$ in front of summation symbols.} See Figure \ref{fig:S6} for the graph of $|S_6(x, y)| = 1$. Just like the graph of $|F_n^*(x, y)| = 1$, the graph of $|S_n(x, y)| = 1$ is invariant under rotation by any integer multiple of $\frac{\pi}{n}$. In \cite{mosunov} it was also demonstrated that
$$
A_{S_n} = 4^{\nu_2(n)/n}B\left(\frac{1}{2} - \frac{1}{n}, \frac{1}{2}\right),
$$
and as a consequence that $\lim\limits_{n \rightarrow \infty}A_{S_n} = \pi$. Since $Q(F_n^*)$ remains invariant under multiplication by a non-zero complex number, we see that $Q(S_n) = Q(2^{n - 1 - \nu_2(n)}F_n^*) = Q(F_n^*)$. In Section \ref{sec:bean} we will prove that $Q(T_n) \leq Q(S_n)$ and $Q(U_n) \leq Q(S_n)$ for every integer $n \geq 3$. We will also prove that
$$
\lim\limits_{n \rightarrow \infty} Q(\Psi_n) = \lim\limits_{n \rightarrow \infty} Q(\Pi_n) = \lim\limits_{n \rightarrow \infty} Q(T_n) = \lim\limits_{n \rightarrow \infty} Q(U_n) = \frac{16}{3}.
$$

\begin{figure}[t!]
\centering
\includegraphics[width=200pt]{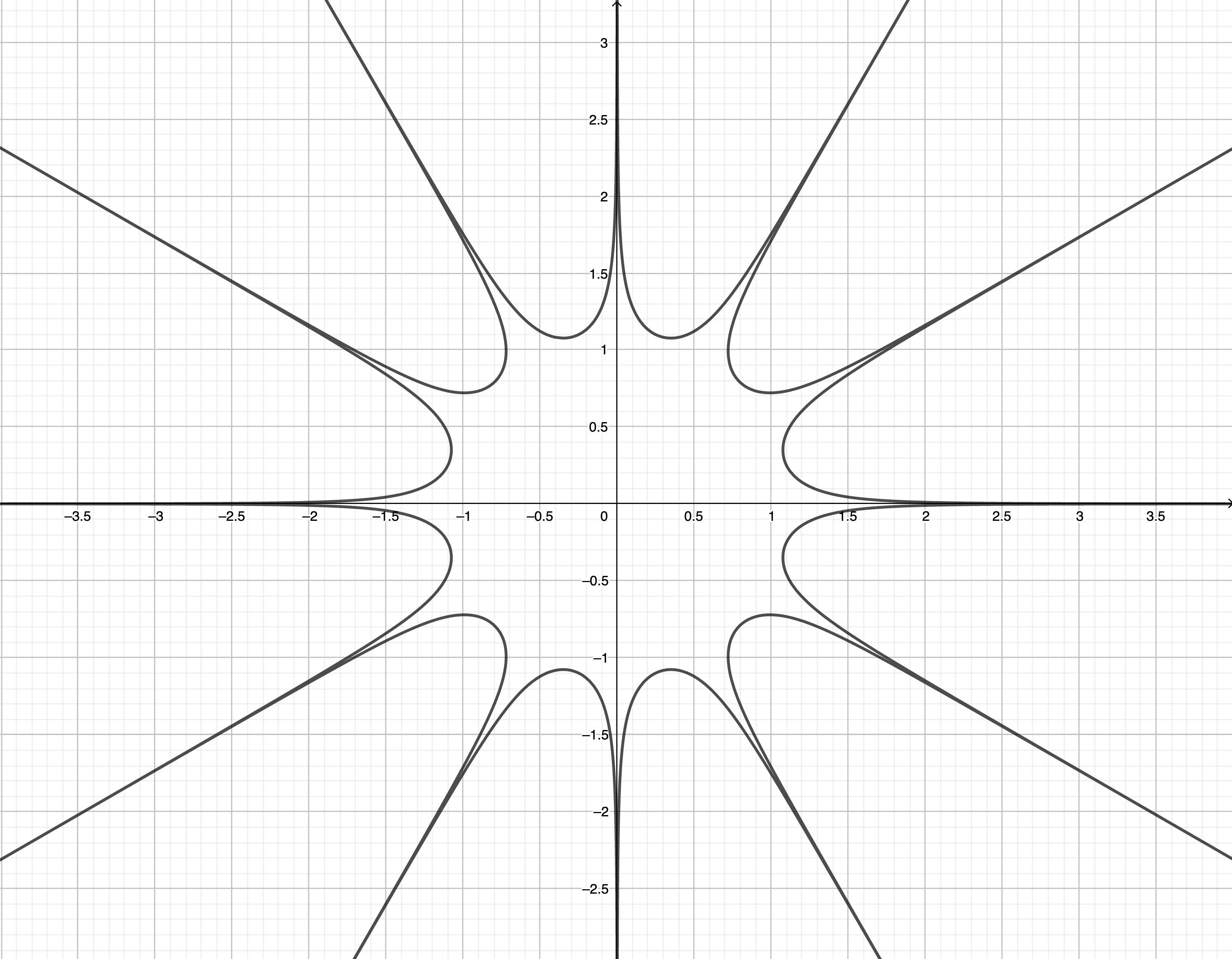}
\caption{Graph of $|S_6(x, y)| = 1$}
\label{fig:S6}
\end{figure}

\begin{table}[b]
\centering
\begin{tabular}{c | c | c | c | c | c}
$n$ & $Q(\Psi_n)$ & $Q(\Pi_n)$ & $Q(S_n)$ & $Q(T_n)$ & $Q(U_n)$\\
\hline
$3$ & $\infty$ & $\infty$ & $15.8997$ & $15.8997$ & $15.8997$\\
$4$ & $\infty$ & $\infty$ & $11.7726$ & $11.4798$ & $11.5438$\\
$5$ & $\infty$ & $10.8953$ & $10.3228$ & $9.85622$ & $9.94897$\\
$6$ & $\infty$ & $\infty$ & $9.55526$ & $8.97702$ & $9.08227$\\
$7$ & $15.8997$ & $8.5577$ & $9.06881$ & $8.41412$ & $8.52393$\\
$8$ & $\infty$ & $\infty$ & $8.72772$ & $8.01814$ & $8.12848$\\
$9$ & $15.8997$ & $9.00056$ & $8.47265$ & $7.72218$ & $7.83097$
\end{tabular}
\caption{Values of $Q(\Psi_n)$, $Q(\Pi_n)$, $Q(S_n)$, $Q(T_n)$ and $Q(U_n)$ for $n = 3, 4, \ldots, 9$.}
\label{tab:table2}
\end{table}

\section{Beta Function and Its Properties} \label{sec:beta}

For a complex number $z$ with positive real part, let
\begin{equation} \label{eq:gamma}
\Gamma(z) = \int\limits_0^\infty t^{z - 1}e^{-t}dt
\end{equation}
denote the \emph{gamma function}. This function satisfies the functional equation
\begin{equation} \label{eq:functional-equation}
\Gamma(z + 1) = z\Gamma(z).
\end{equation}
The gamma function is analytic, with simple pole at $x = 0$. Its residue at $x = 0$ is equal to $1$, i.e.,
\begin{equation} \label{eq:gamma-residue}
\operatorname{Res}(\Gamma(x), 0) = \lim\limits_{x \rightarrow 0}x\Gamma(x) = 1.
\end{equation}

The beta function (\ref{eq:beta-reg}) can be expressed in terms of the gamma function by means of the relation
\begin{equation} \label{eq:beta-gamma}
B(x, y) = \frac{\Gamma(x)\Gamma(y)}{\Gamma(x + y)}.
\end{equation}
If in (\ref{eq:beta-reg}) we make the change of variables $t = \sin^2 \theta$, then we obtain a trigonometric form of the beta function:
\begin{equation} \label{eq:beta}
B(x, y) = 2\int\limits_0^{\pi/2}(\sin \theta)^{2x - 1}(\cos\theta)^{2y - 1}d\theta.
\end{equation}
This representation of $B(x, y)$ is used in the derivation of the formula for $A_{F_n^*}$ \cite{mosunov}. In Lemmas \ref{lem:psi-area}, \ref{lem:chebyshev1-area} and \ref{lem:chebyshev2-area} we will use it to obtain upper and lower bounds on $A_{\Psi_n}$, $A_{T_n}$ and $A_{U_n}$.

Finally, we would like to remark that the values of beta function can be easily computed numerically, as this function is present in various mathematical software, such as Maple (\texttt{Beta(x, y)}), Mathematica  (\texttt{Beta(x, y)}) and MATLAB (\texttt{beta(Z, W)}).

\section{The Limit of $A_{F_n}$ for $F_n(x, y) = ax^n + by^n$} \label{sec:binomial-form}

In this section we prove the following result.

\begin{prop} \label{prop:binomial-forms}
Let $a$, $b$ and $n$ be non-zero integers with $n \geq 3$ and let $F_n(x, y) = ax^n + by^n$. Then $\lim\limits_{n \rightarrow \infty} A_{F_n} = 4$.
\end{prop}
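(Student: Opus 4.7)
The plan is to evaluate the limit separately for each of the three cases in formula~(\ref{eq:AFn}) and verify that each one tends to $4$. Since $a$ and $b$ are fixed non-zero integers, the factor $|ab|^{1/n}$ clearly tends to $1$ as $n \to \infty$, so only the beta-function terms require genuine analysis. First I would convert every beta value to gammas via (\ref{eq:beta-gamma}).

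The central trick is to absorb the leading $1/n$ factor into the pole of $\Gamma$ at $0$ using the functional equation (\ref{eq:functional-equation}). Concretely, $\Gamma(1/n) = n\,\Gamma(1 + 1/n)$ and $\Gamma(2/n) = (n/2)\,\Gamma(1 + 2/n)$. Substituting these into
$$
\frac{1}{n}B\!\left(\tfrac{1}{n},\tfrac{1}{n}\right) = \frac{1}{n}\cdot\frac{\Gamma(1/n)^2}{\Gamma(2/n)}
$$
yields $\dfrac{2\,\Gamma(1+1/n)^2}{\Gamma(1+2/n)}$, which, by continuity of $\Gamma$ at positive reals and $\Gamma(1)=1$, converges to $2$. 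Similarly,
$$
\frac{1}{n}B\!\left(\tfrac{1}{n},1-\tfrac{2}{n}\right) = \frac{1}{n}\cdot\frac{\Gamma(1/n)\,\Gamma(1-2/n)}{\Gamma(1-1/n)} = \frac{\Gamma(1+1/n)\,\Gamma(1-2/n)}{\Gamma(1-1/n)},
$$
which tends to $\dfrac{\Gamma(1)\Gamma(1)}{\Gamma(1)} = 1$.

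With these two limits in hand, the three cases of (\ref{eq:AFn}) are handled uniformly. For $ab>0$ and $n$ even, the expression $\frac{2}{n|ab|^{1/n}}B(1/n,1/n)$ tends to $2 \cdot 2 = 4$. For $ab<0$ and $n$ even, $\frac{4}{n|ab|^{1/n}}B(1/n,1-2/n)$ tends to $4\cdot 1 = 4$. For $n$ odd, the bracketed sum contributes $2\cdot 1 + 2 = 4$ after division by $n$, again with the prefactor $|ab|^{-1/n} \to 1$. Thus all three limits agree and equal $4$.

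I do not anticipate a substantial obstacle: the only delicate point is isolating the simple poles of $\Gamma$ at $0$ via (\ref{eq:functional-equation}) before taking the limit, since a naive attempt to pass the limit inside $B(1/n,\cdot)$ would produce an indeterminate $\infty/\infty$. Once this rewriting is done, continuity of $\Gamma$ together with $\Gamma(1)=1$ finishes the argument immediately.
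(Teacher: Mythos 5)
Your proposal is correct and follows essentially the same route as the paper: reduce to the two limits $\lim_{n\to\infty}\frac{1}{n}B\left(\frac{1}{n},1-\frac{2}{n}\right)=1$ and $\lim_{n\to\infty}\frac{1}{n}B\left(\frac{1}{n},\frac{1}{n}\right)=2$ via (\ref{eq:beta-gamma}), then handle the pole of $\Gamma$ at $0$. The only cosmetic difference is that you absorb the factor $1/n$ using the functional equation $\Gamma(z+1)=z\Gamma(z)$ and continuity at $z=1$, while the paper invokes the equivalent residue statement $\lim_{x\to 0}x\Gamma(x)=1$.
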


\begin{proof}
In view of (\ref{eq:AFn}) and $\lim\limits_{n \rightarrow \infty}|ab|^{1/n} = 1$, the result will follow once we show that
$$
\lim\limits_{n \rightarrow \infty}\frac{1}{n}B\left(\frac{1}{n}, 1 - \frac{2}{n}\right) = 1 \quad \text{and} \quad
\lim\limits_{n \rightarrow \infty}\frac{1}{n}B\left(\frac{1}{n}, \frac{1}{n}\right) = 2.
$$
To see that this is the case, first note that the identities
$$
B\left(\frac{1}{n}, 1 - \frac{2}{n}\right) = \frac{\Gamma(1/n)\Gamma(1 - 2/n)}{\Gamma(1 - 1/n)} \quad \text{and} \quad B\left(\frac{1}{n}, \frac{1}{n}\right) = \frac{\Gamma(1/n)^2}{\Gamma(2/n)}
$$
follow directly from (\ref{eq:beta-gamma}). It follows from (\ref{eq:gamma-residue}) that
$$
\lim\limits_{n \rightarrow \infty} \frac{\Gamma(1/n)}{n} = 1 \quad \text{and} \quad \lim\limits_{n \rightarrow \infty} \frac{\Gamma(2/n)}{n} = \frac{1}{2}.
$$
Since $\Gamma(1) = 1$, we see that
\begin{align*}
\lim\limits_{n \rightarrow \infty}\frac{1}{n}B\left(\frac{1}{n}, 1 - \frac{2}{n}\right)
 = \lim\limits_{n \rightarrow \infty}\frac{\Gamma(1/n)\Gamma(1 - 2/n)}{n\Gamma(1 - 1/n)}
 = \lim\limits_{n \rightarrow \infty} \frac{\Gamma(1/n)}{n}
 = 1
\end{align*}
and
\begin{align*}
\lim\limits_{n \rightarrow \infty}\frac{1}{n}B\left(\frac{1}{n}, \frac{1}{n}\right)
 = \lim\limits_{n \rightarrow \infty}\frac{\Gamma(1/n)^2}{n\Gamma(2/n)}
 = \frac{\lim\limits_{n \rightarrow \infty}\left(\frac{\Gamma(1/n)}{n}\right)^2}{\lim\limits_{n \rightarrow \infty}\frac{\Gamma(2/n)}{n}}
 = 2.
\end{align*}
\end{proof}

\section{Upper and Lower Bounds on $A_{\Psi_n}$} \label{sec:Psi}

In this section we prove the following result.

\begin{lem} \label{lem:psi-area}
For any $n \in \mathbb N$ and $\alpha \in \mathbb R$ such that $\varphi(n) \geq 4$ and $2/\varphi(n) < \alpha < 1$,
\small
\begin{align*}
\int\limits_{-\infty}^{+\infty}\frac{dx}{|\Psi_n(x)|^\alpha} & > \left(4 + \frac{16}{\varphi(n)^2\alpha^2 - 4}\right)\exp\left(-\frac{d(n) \log n}{2}\alpha\right)\\
\int\limits_{-\infty}^{+\infty}\frac{dx}{|\Psi_n(x)|^\alpha} & < 2^{\alpha}\exp\left(\frac{d(n)^2\log n}{2}\alpha\right), \times\\
& \times \left[4 + 2^{1-\alpha}B\left(\frac{1 - \alpha}{2}, \frac{1}{2}\right) - 2\pi + \frac{2}{n}B\left(\frac{\varphi(n)\alpha - 2}{2n}, 1 - \alpha\right) - \frac{2}{n}B\left(\frac{\varphi(n)\alpha + 2}{2n}, 1 - \alpha\right)\right].
\end{align*}
\normalsize
\end{lem}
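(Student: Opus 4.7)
The strategy is to relate $\Psi_n$ to the $n$-th cyclotomic polynomial $\Phi_n$ through the Joukowski substitution $x = z + z^{-1}$. Using (\ref{eq:Psin-formula}) and the pairing of each root $2\cos(2\pi k/n)$ with the primitive $n$-th roots of unity $e^{\pm 2\pi ik/n}$, one checks the identity $z^{\varphi(n)/2}\Psi_n(z + z^{-1}) = \Phi_n(z)$. Setting $z = e^{i\theta}$ gives $|\Psi_n(2\cos\theta)| = |\Phi_n(e^{i\theta})|$, and setting $z = e^t$ gives $|\Psi_n(2\cosh t)| = e^{-t\varphi(n)/2}|\Phi_n(e^t)|$.

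The plan is then to split $\int_{-\infty}^{+\infty} = \int_{-2}^{2} + \int_{|x|>2}$ and apply this substitution on each piece. The bulk integral becomes $\int_0^\pi \frac{2\sin\theta}{|\Phi_n(e^{i\theta})|^\alpha}d\theta$, while the positive tail becomes $\int_0^\infty \frac{2\sinh t \cdot e^{t\varphi(n)\alpha/2}}{|\Phi_n(e^t)|^\alpha}dt$; the negative tail is handled by the parallel substitution $x = -2\cosh t$, using identities such as $\Phi_{2n}(x) = \Phi_n(-x)$ for odd $n$ and its analogues. This reduces the problem to estimating $|\Phi_n|$ on $|z|=1$ and on $(1,\infty)$.

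For these estimates I would use the M\"obius product expansion $\Phi_n(x) = \prod_{d \mid n}(x^d - 1)^{\mu(n/d)}$, which gives
\[
|\Phi_n(e^{i\theta})| = \prod_{d \mid n}\bigl(2|\sin(d\theta/2)|\bigr)^{\mu(n/d)}, \qquad \Phi_n(e^t) = \prod_{d \mid n}(e^{dt} - 1)^{\mu(n/d)}.
\]
Since $\mu(n/d) \in \{-1,0,1\}$ and at most $d(n)$ divisors contribute, I would isolate a principal factor ($2|\sin(n\theta/2)|$ on the bulk, $e^{nt}-1$ on the tail) and bound each of the at most $d(n)$ remaining M\"obius subfactors crudely, so that the cumulative multiplicative error becomes exactly the $\exp(c \cdot d(n)\log n \cdot \alpha)$-type prefactor in the lemma. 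Once this extraction is done, the resulting ``clean'' integrals reduce, via the trigonometric form (\ref{eq:beta}) of the beta function (and its hyperbolic analogue on the tail, reachable by a change of variable such as $u = e^{-nt}$), to the explicit terms $2^{1-\alpha}B\bigl(\tfrac{1-\alpha}{2}, \tfrac12\bigr)$ and $\tfrac{2}{n}B\bigl(\tfrac{\varphi(n)\alpha \pm 2}{2n}, 1-\alpha\bigr)$ in the statement. The universal constants $4$ and $4 + \tfrac{16}{\varphi(n)^2\alpha^2 - 4}$ fall out of elementary reference integrals such as $\int_0^\pi \sin\theta\,d\theta = 2$ on the bulk and $\int_0^\infty e^{(1-\varphi(n)\alpha/2)t}\sinh t\,dt$ on each tail.

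The hard part will be the upper bound on the integral, which demands a pointwise \emph{lower} bound on $|\Phi_n(e^{i\theta})|$ in spite of $\Phi_n$ vanishing at each primitive $n$-th root of unity on $|z|=1$. One has to isolate each of the $\varphi(n)$ zeros and show that its local contribution is exactly captured by the beta-function term $B\bigl(\tfrac{1-\alpha}{2}, \tfrac12\bigr)$ (whose integrand has the matching $\sin^{-\alpha}$ behaviour), while the losses from the M\"obius denominators are bundled into the $n^{d(n)^2\alpha/2}$ prefactor. This is also the source of the asymmetry with the lower bound: for the lower bound one only needs a clean upper estimate on $|\Phi_n|$ from the M\"obius product, hence a single factor of $d(n)$ in the exponent, whereas the upper bound requires the reciprocal estimate and each M\"obius denominator then contributes its own factor of $n$, producing $d(n)^2$.
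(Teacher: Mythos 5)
Your overall architecture matches the paper's proof in Section \ref{sec:Psi} almost exactly: the Lehmer/Joukowski identity $x^{\varphi(n)/2}\Psi_n(x+x^{-1}) = \Phi_n(x)$ specialized at $e^{i\theta}$ and $\pm e^{-t}$, the three-way splitting of the integral, the reduction of the negative tail to $\Phi_{2n}$ when $n$ is odd, and the beta-function evaluations of the clean bulk and tail integrals are all as in the paper. The divergence is in how you estimate $|\Phi_n|$, and that is where your proposal has a genuine gap. You propose to use $\Phi_n(x) = \prod_{d\mid n}(x^d-1)^{\mu(n/d)}$ and to ``bound each of the at most $d(n)$ remaining M\"obius subfactors crudely.'' This cannot work as stated: in $|\Phi_n(e^{i\theta})| = \prod_{d\mid n}\bigl(2|\sin(d\theta/2)|\bigr)^{\mu(n/d)}$ the subfactors carrying $\mu(n/d) = -1$ are reciprocals of functions vanishing on $[0,\pi]$ (at $\theta = 2\pi j/d$), so each has supremum $+\infty$; symmetrically, when you invert the product to get the pointwise lower bound on $|\Phi_n|$ needed for the upper estimate of the integral, the factors with $\mu(n/d)=+1$ and $d \neq n$ land in the denominator and are again unbounded. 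The same problem occurs on the tail as $t \to 0^{+}$. A factor-by-factor bound therefore yields nothing; what is needed is the extra idea of pairing each bad divisor $d$ with a multiple $d' = pd$ dividing $n$ of opposite M\"obius sign and using $|z^{d'}-1|/|z^{d}-1| = |1 + z^{d} + \cdots + z^{d'-d}| \leq d'/d$. Such a pairing argument can be made to work (and would even give a slightly better prefactor, with $2^{\omega(n)}$ in place of $d(n)^2$), but it is a real step that your proposal does not supply.

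The paper sidesteps the issue by working with the polynomial identity $z^n - 1 = \prod_{m\mid n}\Phi_m(z)$ rather than its M\"obius inversion: then $\Phi_n(z)^{-1} = (z^n-1)^{-1}\prod_{m\mid n,\, m\ne n}\Phi_m(z)$, and each $\Phi_m$ is a \emph{polynomial}, bounded on the closed unit disk by its coefficient sum $L(\Phi_m) \leq m^{d(m)/2}$; this is exactly Lemmas \ref{lem:L-bound} and \ref{lem:prod-L-bound} and the source of the $\exp\bigl(\frac{d(n)^2\log n}{2}\alpha\bigr)$ prefactor, while the lower bound on the integral needs only $|\Phi_n(z)| \leq L(\Phi_n) \leq n^{d(n)/2}$ for $|z|\leq 1$. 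To salvage your route, either carry out the divisor pairing explicitly or replace the M\"obius product by these $L$-bounds. One further slip: your tail reference integral should be $\int_0^\infty e^{-\varphi(n)\alpha t/2}\,2\sinh t\,dt = 8/(\varphi(n)^2\alpha^2-4)$; as written, $\int_0^\infty e^{(1-\varphi(n)\alpha/2)t}\sinh t\,dt$ diverges whenever $\varphi(n)\alpha \leq 4$, which the hypothesis $\alpha > 2/\varphi(n)$ does not exclude.
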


Let us see why Theorem \ref{thm:main3} directly follows from Lemma \ref{lem:psi-area}.

\begin{proof}[Proof of Theorem \ref{thm:main3}]
It follows from (\ref{eq:Psin-formula}) that the degree of $\Psi_n$ is equal to $\varphi(n)/2$. Since the area $A_{\Psi_n}$ bounded by the curve $|\Psi_n(x, y)| = 1$ can be computed via (\ref{eq:AF-formula}), the inequalities (\ref{eq:APsin-bounds}) follow from Lemma \ref{lem:psi-area} once we take $\alpha = 4/\varphi(n)$.

The fact that $\lim\limits_{n \rightarrow \infty} A_{\Psi_n} = \frac{16}{3}$ is a consequence of the following:
$$
\lim\limits_{n \rightarrow \infty} B\left(\frac{1}{2}, \frac{1}{2} - \frac{2}{\varphi(n)}\right) = B\left(\frac{1}{2}, \frac{1}{2}\right) = \frac{\Gamma(1/2)^2}{\Gamma(1)} = \pi,
$$
$$
\lim\limits_{n \rightarrow \infty} \frac{2}{n}B\left(\frac{1}{n}, 1 - \frac{4}{\varphi(n)}\right) = \lim\limits_{n \rightarrow \infty} \frac{2}{n}\Gamma\left(\frac{1}{n}\right) = 2,
$$
$$
\lim\limits_{n \rightarrow \infty} \frac{2}{n}B\left(\frac{3}{n}, 1 - \frac{4}{\varphi(n)}\right) = \lim\limits_{n \rightarrow \infty} \frac{2}{n}\Gamma\left(\frac{3}{n}\right) = \frac{2}{3}.
$$
Further, we have
$$
\lim\limits_{n \rightarrow \infty} \frac{2d(n)\log n}{\varphi(n)} = 0 \quad \text{and} \quad \lim\limits_{n \rightarrow \infty} \frac{2d(n)^2\log n}{\varphi(n)} = 0.
$$
These two limits follow from the fact that, for every integer $n > 2$,
\begin{equation} \label{eq:d}
d(n) \leq n^{\frac{1.067}{\log\log n}}
\end{equation}
and
\begin{equation} \label{eq:phi-lower-bound}
\varphi(n) > \frac{n}{e^\gamma \log\log n + \frac{3}{\log \log n}}.
\end{equation}
Here $\gamma \approx 0.5772$ denotes the Euler-Mascheroni constant. The upper bound on $d(n)$ was derived by Nicolas and Robin \cite{nicolas-robin}, while the lower bound on $\varphi(n)$ can be found in the paper of Rosser and Schoenfeld \cite[Theorem 15]{rosser-schoenfeld}.
\end{proof}

Before proving Lemma \ref{lem:psi-area}, we need to establish three lemmas.

\begin{lem} \label{lem:psi}
Let $n$ be an integer such that $n \geq 3$. Then, for every \mbox{real number $\theta$},
\begin{equation} \label{eq:psi-cos}
\Psi_n(\pm 2\cos \theta) = (\pm 1)^{\frac{\varphi(n)}{2}}e^{-\frac{\varphi(n)}{2}\theta i}\Phi_n\left(\pm e^{\theta i}\right),
\end{equation}
\begin{equation} \label{eq:psi-cosh}
\Psi_n(\pm 2\cosh \theta) = (\pm 1)^{\frac{\varphi(n)}{2}}e^{\frac{\varphi(n)}{2}\theta}\Phi_n\left(\pm e^{-\theta}\right). 
\end{equation}
\end{lem}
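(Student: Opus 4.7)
The plan is to reduce both identities to a single algebraic factorization. One checks by direct expansion that for real $\theta$ and $\phi$,
\[
2\cos\theta - 2\cos\phi = e^{-i\theta}\bigl(e^{i\theta} - e^{i\phi}\bigr)\bigl(e^{i\theta} - e^{-i\phi}\bigr),
\]
and the same manipulation with $\cosh$ yields
\[
2\cosh\theta - 2\cos\phi = e^{-\theta}\bigl(e^{\theta} - e^{i\phi}\bigr)\bigl(e^{\theta} - e^{-i\phi}\bigr).
\]
Setting $\phi = 2\pi k/n$ in the first of these and multiplying over the $\varphi(n)/2$ indices from the product formula (\ref{eq:Psin-formula}) gives
\[
\Psi_n(2\cos\theta) = e^{-i\theta\varphi(n)/2} \prod_{\substack{1 \leq k < n/2\\ \gcd(k,n)=1}} \bigl(e^{i\theta} - e^{2\pi i k/n}\bigr)\bigl(e^{i\theta} - e^{-2\pi i k/n}\bigr).
\]

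The key observation is that for $n \geq 3$ the involution $k \mapsto n-k$ is a bijection between primitive residues in $[1, n/2)$ and those in $(n/2, n]$ (the value $n/2$ is never coprime to $n$ when $n \geq 3$), so the double product above coincides with $\prod_{\gcd(k,n)=1,\, 1 \leq k \leq n}\bigl(e^{i\theta} - e^{2\pi i k/n}\bigr) = \Phi_n(e^{i\theta})$. This establishes (\ref{eq:psi-cos}) with the $+$ sign. The same reasoning applied to the hyperbolic factorization produces $\Psi_n(2\cosh\theta) = e^{-\theta\varphi(n)/2}\Phi_n(e^\theta)$; to bring this into the form stated in (\ref{eq:psi-cosh}) I would invoke the self-reciprocal identity $\Phi_n(z) = z^{\varphi(n)}\Phi_n(1/z)$ (valid for all $n \geq 2$ since nonreal primitive roots pair as $\zeta, \zeta^{-1}$), which converts $e^{-\theta\varphi(n)/2}\Phi_n(e^\theta)$ into $e^{\theta\varphi(n)/2}\Phi_n(e^{-\theta})$.

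For the $-$ signs I would repeat the argument starting from the sign-flipped factorizations
\[
-2\cos\theta - 2\cos\phi = -e^{-i\theta}\bigl(e^{i\theta} + e^{i\phi}\bigr)\bigl(e^{i\theta} + e^{-i\phi}\bigr)
\]
and
\[
-2\cosh\theta - 2\cos\phi = -e^{-\theta}\bigl(e^\theta + e^{i\phi}\bigr)\bigl(e^\theta + e^{-i\phi}\bigr),
\]
each obtained by the same expansion. Taking the product over the $\varphi(n)/2$ indices pulls out $(-1)^{\varphi(n)/2}$, the pairing $k \mapsto n - k$ again assembles the factors into $\Phi_n(-e^{i\theta})$ (respectively $\Phi_n(-e^\theta)$), and in the $\cosh$ case a final application of the self-reciprocal identity produces $\Phi_n(-e^{-\theta})$ with exponential factor $e^{\theta\varphi(n)/2}$, as claimed.

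The main obstacle is purely bookkeeping: tracking signs, powers of $(-1)^{\varphi(n)/2}$, and the reciprocal rearrangement. The argument depends on $\varphi(n)$ being even, so that $(-1)^{\varphi(n)} = 1$ and the palindrome is exact; this fails only for $n \in \{1, 2\}$ and is therefore consistent with the hypothesis $n \geq 3$.
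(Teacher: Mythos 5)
Your proof is correct, but it takes a genuinely different route from the paper. The paper's proof is a two-line reduction to Lehmer's identity $\Psi_n(x + x^{-1}) = x^{-\varphi(n)/2}\Phi_n(x)$, cited as a black box: substituting $x = e^{\theta i}$ gives (\ref{eq:psi-cos}) with plus signs, and the remaining three cases all follow at once by replacing $\theta$ with $\pi + \theta$, $\theta i$ and $\pi + \theta i$ — in particular the substitution $\theta \mapsto \theta i$ lands directly on the stated form $e^{\frac{\varphi(n)}{2}\theta}\Phi_n(e^{-\theta})$, so no palindromicity argument is needed. You instead rederive Lehmer's identity from scratch: starting from the root factorization (\ref{eq:Psin-formula}), using the elementary identity $2\cos\theta - 2\cos\phi = e^{-i\theta}(e^{i\theta} - e^{i\phi})(e^{i\theta} - e^{-i\phi})$ and the pairing $k \mapsto n - k$ of primitive residues to assemble $\Phi_n$, then invoking the self-reciprocity $\Phi_n(z) = z^{\varphi(n)}\Phi_n(1/z)$ to convert the hyperbolic case into the stated form. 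All the steps check out (including the observation that $n/2$ is never coprime to $n$ for $n \geq 3$, and that $\varphi(n)$ is even there), so what you gain is a self-contained argument depending only on (\ref{eq:Psin-formula}) rather than on the external reference to Lehmer; what you pay is length and the sign bookkeeping you acknowledge, plus the extra appeal to palindromicity, which could be avoided entirely by noting that $\cosh$ is even and replacing $\theta$ by $-\theta$ in your intermediate identity $\Psi_n(2\cosh\theta) = e^{-\theta\varphi(n)/2}\Phi_n(e^{\theta})$.
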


\begin{proof}
According to Lehmer \cite{lehmer},
$$
\Psi_n(x + x^{-1}) = x^{-\varphi(n)/2}\Phi_n(x).
$$
Putting $x = e^{\theta i}$, we obtain the identity (\ref{eq:psi-cos}), where the plus signs are chosen everywhere. Replacing $\theta$ with $\pi + \theta$, $\theta i$ and $\pi + \theta i$ we obtain the identity (\ref{eq:psi-cos}) where the minus signs are chosen everywhere, as well as the two identities in (\ref{eq:psi-cosh}).
\end{proof}

For a polynomial $f(x)$, let $L(f)$ denote the sum of the absolute values of coefficients of $f(x)$. The following lemma can be found in \cite[Lemme 4.1]{fouvry-waldschmidt}.

\begin{lem} \label{lem:L-bound}
For every integer $n > 1$, $L(\Phi_n) \leq n^{\frac{d(n)}{2}}$.
\end{lem}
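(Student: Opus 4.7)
The plan is to leverage the Möbius-inversion formula $\Phi_n(x) = \prod_{d \mid n}(x^d - 1)^{\mu(n/d)}$ and view $\Phi_n$ as a quotient of two integer polynomials whose $L$-norms are controllable. Setting $A = \{d \mid n : \mu(n/d) = 1\}$ and $B = \{d \mid n : \mu(n/d) = -1\}$, I would put $P(x) = \prod_{d \in A}(x^d - 1)$ and $Q(x) = \prod_{d \in B}(x^d - 1)$, so that $\Phi_n(x) = P(x)/Q(x)$ holds in $\mathbb{Z}[x]$ (the overall sign cancels for $n > 1$, since then $|A| = |B|$). Submultiplicativity of $L$ together with $L(x^d - 1) = 2$ yields $L(P) \leq 2^{|A|}$ and $L(Q) \leq 2^{|B|}$, while $|A| + |B| = 2^{\omega(n)}$ counts divisors $d \mid n$ with $n/d$ squarefree, a quantity bounded by $d(n)$.

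From there, I would extract the coefficients $c_k$ of $\Phi_n$ via Cauchy's integral formula on a circle $|z| = r > 1$ chosen so that $Q$ does not vanish on it:
\[
|c_k| \leq r^{-k}\,\max_{|z|=r}|\Phi_n(z)| \leq r^{-k}\,\frac{\prod_{d \in A}(r^d + 1)}{\prod_{d \in B}(r^d - 1)}.
\]
Summing over $0 \leq k \leq \varphi(n)$ and optimizing $r$ (close enough to $1$ to keep the numerator modest, yet bounded away from $1$ so the denominator does not collapse) yields an upper bound on $L(\Phi_n) = \sum_k |c_k|$. The target $n^{d(n)/2} = \exp\bigl(\tfrac{1}{2}d(n)\log n\bigr)$ is consistent with each divisor $d \in A \cup B$ contributing roughly $n^{1/2}$ after optimization, since the product structure involves $|A| + |B| = 2^{\omega(n)} \leq d(n)$ factors.

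The main obstacle is tightening the crude quotient bound $\max_{|z|=r}|P(z)|/\min_{|z|=r}|Q(z)|$: this overcounts badly, because $P$ and $Q$ share roots at non-primitive $n$-th roots of unity and the true size of $|\Phi_n(z)|$ is much smaller than the quotient of extremes. I expect the cleanest route, and plausibly the one followed by Fouvry and Waldschmidt, is to replace the naive contour estimate by combining Cauchy--Schwarz $L(\Phi_n)^2 \leq (\varphi(n)+1)\sum_k |c_k|^2$ with the Parseval identity $\sum_k |c_k|^2 = \int_0^1 |\Phi_n(e^{2\pi it})|^2\,dt$, then bounding this integral by expanding $|\Phi_n(e^{2\pi it})|^2 = \prod_{d \mid n}|2\sin(\pi dt)|^{2\mu(n/d)}$ and applying Jensen's formula divisor by divisor. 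This pathway produces a contribution of $\log n$ (rather than $\log 2^{n}$) per divisor, which is what yields the factor $d(n)/2$ in the exponent.
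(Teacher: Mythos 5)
First, note that the paper does not prove this lemma at all: it is quoted directly from Fouvry--Waldschmidt \cite[Lemme 4.1]{fouvry-waldschmidt}, so there is no in-paper argument to match. Judged on its own terms, your proposal is a plan with two candidate strategies, neither of which closes. The contour-integral route, as you yourself concede, ``overcounts badly,'' and the loss is fatal rather than cosmetic: the bound $L(\Phi_n) \leq n^{d(n)/2}$ is attained with equality for $n = p$ prime, where $L(\Phi_p) = p = p^{d(p)/2}$, so any argument carrying stray factors of $\varphi(n)+1$ (from summing $r^{-k}$ over $0 \leq k \leq \varphi(n)$) or of $(r^d+1)$ over $d \in A$ cannot recover the stated inequality no matter how $r$ is optimized.

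The second route contains a concretely wrong step. Cauchy--Schwarz plus Parseval reduces the problem to an \emph{upper} bound on $\int_0^1 |\Phi_n(e^{2\pi i t})|^2\,dt = \sum_k c_k^2$, but Jensen's formula applied ``divisor by divisor'' to $|\Phi_n(e^{2\pi i t})|^2 = \prod_{d \mid n}|2\sin(\pi d t)|^{2\mu(n/d)}$ only yields $\int_0^1 \log|\Phi_n(e^{2\pi i t})|\,dt = 0$ (each factor integrates to zero), i.e.\ the Mahler measure is $1$. By the convexity of $\exp$, this gives $\int_0^1|\Phi_n(e^{2\pi i t})|^2\,dt \geq 1$ --- a \emph{lower} bound on the very quantity you need to bound from above --- and no mechanism in your sketch produces the claimed ``$\log n$ per divisor'' upper bound. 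So no proof is actually given. A working elementary argument (Bateman's classical one) runs differently: reduce to $n$ squarefree via $\Phi_n(x) = \Phi_{\mathrm{rad}(n)}(x^{n/\mathrm{rad}(n)})$, write $\Phi_n(x) = \pm\prod_{d \in A}(1 - x^{d})\prod_{d \in B}\bigl(\sum_{j \geq 0}x^{jd}\bigr)$, and bound the coefficients of the second (nonnegative) factor by counting the representations $k = \sum_{d \in B} c_d d$ with $0 \leq c_d < n/d$, which is where the product of divisors --- and hence $n^{d(n)/2} = \prod_{d \mid n} d$ --- actually enters. If you want to fix the proposal, that combinatorial counting step is the missing idea.
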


\begin{lem} \label{lem:prod-L-bound}
For every positive integer $n$,
$$
\prod\limits_{\substack{m \mid n\\m \neq 1}}L(\Phi_m) \leq \exp\left(\frac{d(n)^2\log n}{2}\right).
$$
\end{lem}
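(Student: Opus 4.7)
The strategy is to take the logarithm of the product, apply Lemma \ref{lem:L-bound} term-by-term, and then absorb each factor into a single bound controlled by $d(n)$ and $\log n$.

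First I would observe that every divisor $m$ of $n$ with $m \neq 1$ satisfies $m \leq n$, hence $\log m \leq \log n$. Moreover, because any divisor of $m$ is automatically a divisor of $n$ when $m \mid n$, we have $d(m) \leq d(n)$. Applying Lemma \ref{lem:L-bound} to each factor $L(\Phi_m)$ with $m > 1$ and taking logarithms gives
$$
\log \prod_{\substack{m \mid n\\ m \neq 1}} L(\Phi_m) \;\leq\; \sum_{\substack{m \mid n\\ m \neq 1}} \frac{d(m)}{2}\log m \;\leq\; \frac{d(n)\log n}{2}\sum_{\substack{m \mid n\\ m \neq 1}} 1.
$$

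Next I would note that the number of divisors $m$ of $n$ with $m \neq 1$ is exactly $d(n) - 1 \leq d(n)$, so the sum on the right is bounded by $d(n)$. Substituting this in gives
$$
\log \prod_{\substack{m \mid n\\ m \neq 1}} L(\Phi_m) \;\leq\; \frac{d(n)^2\log n}{2},
$$
and exponentiating yields the claimed inequality.

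There is no substantial obstacle here: the lemma is essentially a bookkeeping consequence of Lemma \ref{lem:L-bound}, the monotonicity $d(m) \leq d(n)$ for $m \mid n$, and the trivial count $\#\{m : m \mid n\} = d(n)$. The only mild temptation is to try to sharpen the bound by exploiting the fact that small divisors $m$ contribute less, but for the application (controlling error terms that vanish as $n \to \infty$ in Theorem \ref{thm:main3}) the crude bound $d(n)^2 \log n / 2$ is already sufficient, since $d(n)^2 \log n / \varphi(n) \to 0$ by (\ref{eq:d}) and (\ref{eq:phi-lower-bound}).
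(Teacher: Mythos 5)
Your proof is correct and follows essentially the same route as the paper: apply Lemma \ref{lem:L-bound} to each factor, bound $d(m)\log m$ by $d(n)\log n$ for each divisor $m \mid n$, and count the divisors by $d(n)$. The only cosmetic difference is that the paper enlarges the product to include $m = 1$ (harmless, since that factor is $1$) before summing, whereas you keep the restriction and use $d(n) - 1 \leq d(n)$.
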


\begin{proof}
By Lemma \ref{lem:L-bound},
\begin{align*}
\prod\limits_{\substack{m \mid n\\m \neq 1}}L(\Phi_m)
& \leq \prod\limits_{m \mid n}m^{\frac{d(m)}{2}}\\
& = \exp\left(\frac{1}{2}\sum\limits_{m \mid n}d(m)\log m\right)\\
& \leq \exp\left(\frac{d(n)\log n}{2}\sum\limits_{m \mid n}1\right)\\
& = \exp\left(\frac{d(n)^2\log n}{2}\right).
\end{align*}
\end{proof}

\begin{proof}[Proof of Lemma \ref{lem:psi-area}]
Note that
\small
\begin{align*}
\int\limits_{-\infty}^{+\infty}\frac{dx}{|\Psi_n(x)|^{\alpha}}
& = \int\limits_{-2}^2\frac{dx}{|\Psi_n(x)|^{\alpha}} + \int\limits_{2}^{+\infty}\frac{dx}{|\Psi_n(x)|^{\alpha}} + \int\limits_{-\infty}^{-2}\frac{dx}{|\Psi_n(x)|^{\alpha}}\\
& = -\int\limits_{0}^\pi\frac{d(2\cos\theta)}{|\Psi_n(2\cos \theta)|^{\alpha}} + \int\limits_{0}^{+\infty}\frac{d(2\cosh\theta)}{|\Psi_n(2\cosh\theta)|^{\alpha}} + \int\limits_{-\infty}^{0}\frac{d(-2\cosh\theta)}{|\Psi_n(-2\cosh\theta)|^{\alpha}}\\
& = 2\int\limits_{0}^{\pi}\frac{\sin\theta d\theta}{|\Psi_n(2\cos \theta)|^{\alpha}} + 2\int\limits_{0}^{+\infty}\frac{\sinh\theta d\theta}{|\Psi_n(2\cosh\theta)|^{\alpha}} + 2\int\limits_{0}^{+\infty}\frac{\sinh\theta d\theta}{|\Psi_n(-2\cosh\theta)|^{\alpha}}\\
& = 2\int\limits_{0}^{\pi}\frac{\sin\theta d\theta}{|\Phi_n(e^{\theta i})|^{\alpha}} + 2\int\limits_{0}^{+\infty}\frac{\sinh\theta d\theta}{e^{(\varphi(n)/2)\alpha\theta}\Phi_n(e^{-\theta})^{\alpha}} + 2\int\limits_{0}^{+\infty}\frac{\sinh\theta d\theta}{e^{(\varphi(n)/2)\alpha\theta}\Phi_n(-e^{-\theta})^{\alpha}},
\end{align*}
\normalsize
where the last equality follows from Lemma \ref{lem:psi}. Let
\small
$$
C_1 = 2\int\limits_{0}^{\pi}\frac{\sin\theta d\theta}{|\Phi_n(e^{\theta i})|^{\alpha}}, \quad C_2 = 2\int\limits_{0}^{+\infty}\frac{\sinh\theta d\theta}{e^{(\varphi(n)/2)\alpha\theta}\Phi_n(e^{-\theta})^{\alpha}}, \quad C_3 = 2\int\limits_{0}^{+\infty}\frac{\sinh\theta d\theta}{e^{(\varphi(n)/2)\alpha\theta}\Phi_n(-e^{-\theta})^{\alpha}}.
$$
\normalsize
We will now derive upper and lower bounds on $C_1$, $C_2$ and $C_3$.

We begin with the derivation of a lower bound. By Lemma \ref{lem:L-bound},
$$
|\Phi_n(z)| \leq L(\Phi_n)\max\{1, |z|\}^{\varphi(n)} \leq n^{\frac{d(n)}{2}}
$$
for all $z \in \mathbb C$ such that $|z| \leq 1$. In view of this,
\begin{align*}
C_1
& = 2\int\limits_{0}^{\pi}\frac{\sin\theta d\theta}{|\Phi_n(e^{\theta i})|^{\alpha}}\\
& \geq 2n^{-\frac{d(n)}{2}\alpha}\int\limits_{0}^{\pi}\sin\theta d\theta\\
& = 4\exp\left(-\frac{d(n) \log n}{2}\alpha\right)
\end{align*}
and
\begin{align*}
C_2
& = 2\int\limits_{0}^{+\infty}\frac{\sinh\theta d\theta}{e^{(\varphi(n)/2)\alpha\theta}\Phi_n\left(e^{-\theta}\right)^{\alpha}}\\
& \geq 2n^{-\frac{d(n)}{2}\alpha}\int\limits_0^{+\infty}e^{-\frac{\varphi(n)}{2}\alpha\theta}\sinh\theta d\theta\\
& = 2n^{-\frac{d(n)}{2}\alpha}\frac{1}{\left(\frac{\varphi(n)}{2}\alpha\right)^2 - 1}\\
& =  \frac{8}{\varphi(n)^2\alpha^2 - 4}\exp\left(-\frac{d(n) \log n}{2}\alpha\right).
\end{align*}
Similarly,
$$
C_3 \geq \frac{8}{\varphi(n)^2\alpha^2 - 4}\exp\left(-\frac{d(n) \log n}{2}\alpha\right).
$$
We conclude that
$$
\int\limits_{-\infty}^{+\infty}\frac{dx}{|\Psi_n(x)|^{\alpha}} = C_1 + C_2 + C_3 \geq \left(4 + \frac{16}{\varphi(n)^2\alpha^2 - 4}\right)\exp\left(-\frac{d(n) \log n}{2}\alpha\right).
$$

To derive an upper bound, we will make use of the identity
$$
z^n - 1 = \prod\limits_{m \mid n}\Phi_m(z).
$$
Notice that
$$
\Phi_n(z) = (z^n - 1) \prod\limits_{\substack{m \mid n\\ m \neq n}}\Phi_m(z)^{-1}.
$$
Thus for every $z \in \mathbb C$ such that $|z| < 1$ we have
\begin{align*}
|\Phi_n(z)|^{-1}
& = \frac{1}{|z^n - 1|}\prod\limits_{\substack{m \mid n\\ m \neq n}}|\Phi_m(z)|\\
& = \frac{|z - 1|}{|z^n - 1|}\prod\limits_{\substack{m \mid n\\ 1 < m < n}}|\Phi_m(z)|\\
& \leq \frac{|z - 1|}{|z^n - 1|}\prod\limits_{\substack{m \mid n\\m \neq 1}}L(\Phi_m)\\
& < \frac{2}{|z^n - 1|}\exp\left(\frac{d(n)^2\log n}{2}\right),
\end{align*}
where the last inequality follows from Lemma \ref{lem:prod-L-bound}. In view of this,
\begin{align*}
C_1
& = 2\int\limits_{0}^{\pi}\frac{\sin\theta d\theta}{|\Phi_n(e^{\theta i})|^{\alpha}}\\
& \leq 2^{1 + \alpha}\exp\left(\frac{d(n)^2\log n}{2}\alpha\right)\int\limits_0^{\pi}\frac{\sin\theta d\theta}{|e^{n\theta i} - 1|^{\alpha}}\\
& = 2^{1 + \alpha}\exp\left(\frac{d(n)^2\log n}{2}\alpha\right)\left(\int\limits_0^{\pi}\sin\theta d\theta + \int\limits_0^{\pi}\sin\theta\left(|e^{n\theta i} - 1|^{-\alpha} - 1\right) d\theta\right)\\
&\leq 2^{1 + \alpha}\exp\left(\frac{d(n)^2\log n}{2}\alpha\right)\left(2 + \int\limits_0^{\pi}\left(\frac{1}{|e^{n\theta i} - 1|^{\alpha}} - 1\right) d\theta\right)\\
& = 2^{1 + \alpha}\exp\left(\frac{d(n)^2\log n}{2}\alpha\right)\left(2 + \int\limits_0^{\pi}\frac{d\theta}{|e^{n\theta i} - 1|^{\alpha}} - \pi\right).
\end{align*}
It remains to estimate the integral $\int_0^{\pi}\frac{d\theta}{|e^{n\theta i} - 1|^{\alpha}}$. Note that
$$
|e^{n\theta i} - 1|^2 = \left(\cos(n\theta) - 1\right)^2 + \sin^2(n\theta) = 2 - 2\cos(n\theta) = 4\sin^2\left(\frac{n\theta}{2}\right).
$$
Therefore,
\begin{align*}
\int_0^{\pi}\frac{d\theta}{|e^{n\theta i} - 1|^{\alpha}}
& = 2^{-\alpha}\int\limits_0^{\pi}\left|\sin\left(\frac{n\theta}{2}\right)\right|^{-\alpha}d\theta\\
& = \frac{2^{1-\alpha}}{n}\int\limits_0^{n\pi/2}\left|\sin \theta\right|^{-\alpha}d\theta\\
& = 2^{-\alpha}\cdot 2\int\limits_0^{\pi/2}\left(\sin \theta\right)^{-\alpha}d\theta\\
& = 2^{-\alpha}B\left(\frac{1 - \alpha}{2}, \frac{1}{2}\right).
\end{align*}
We conclude that
\begin{align*}
C_1
& = 2\int\limits_{0}^{\pi}\frac{\sin\theta d\theta}{|\Phi_n(e^{\theta i})|^{\alpha}}\\
& \leq 2^{\alpha}\exp\left(\frac{d(n)^2\log n}{2}\alpha\right)\left(4 + 2^{1-\alpha}B\left(\frac{1 - \alpha}{2}, \frac{1}{2}\right) - 2\pi\right).
\end{align*}

Next, we derive an upper bound on $C_2$:
\begin{align*}
C_2
& = 2\int\limits_{0}^{+\infty}\frac{\sinh\theta d\theta}{e^{(\varphi(n)/2)\alpha\theta}\Phi_n(e^{-\theta})^{\alpha}}\\
& \leq 2^{1 + \alpha}\exp\left(\frac{d(n)^2\log n}{2}\alpha\right)\int\limits_0^{+\infty}\frac{\sinh\theta d\theta}{e^{(\varphi(n)/2)\alpha\theta}(1 - e^{-n\theta})^{\alpha}}\\
& = 2^{1 + \alpha}\exp\left(\frac{d(n)^2\log n}{2}\alpha\right)\frac{1}{2n}\int\limits_0^1\left(t^{\frac{\varphi(n)\alpha - 2}{2n} - 1} - t^{\frac{\varphi(n)\alpha + 2}{2n} - 1}\right)(1 - t)^{-\alpha} dt\\
& = 2^{\alpha}\exp\left(\frac{d(n)^2\log n}{2}\alpha\right)\left(\frac{1}{n}B\left(\frac{\varphi(n)\alpha - 2}{2n}, 1 - \alpha\right) - \frac{1}{n}B\left(\frac{\varphi(n)\alpha + 2}{2n}, 1 - \alpha\right)\right),
\end{align*}
where the second-to-last equality follows from the change of variables $t = e^{-n\theta}$.

Finally, we derive an upper bound on $C_3$. We consider two cases.

\emph{Case 1.} Suppose that $n$ is even. Then
\begin{align*}
C_3 & = 2\int\limits_{0}^{+\infty}\frac{\sinh\theta d\theta}{e^{(\varphi(n)/2)\alpha\theta}\Phi_n(-e^{-\theta})^{\alpha}}\\
& \leq 2^{1 + \alpha}\exp\left(\frac{d(n)^2\log n}{2}\alpha\right)\int\limits_0^{+\infty}\frac{\sinh\theta d\theta}{e^{(\varphi(n)/2)\alpha\theta}\left(1 - (-e^{-\theta})^n\right)^{\alpha}}\\
 & = 2^{1 + \alpha}\exp\left(\frac{d(n)^2\log n}{2}\alpha\right)\int\limits_0^{+\infty}\frac{\sinh\theta d\theta}{e^{(\varphi(n)/2)\alpha\theta}\left(1 - e^{-n\theta}\right)^{\alpha}}\\
& = 2^{\alpha}\exp\left(\frac{d(n)^2\log n}{2}\alpha\right)\left(\frac{1}{n}B\left(\frac{\varphi(n)\alpha - 2}{2n}, 1 - \alpha\right) - \frac{1}{n}B\left(\frac{\varphi(n)\alpha + 2}{2n}, 1 - \alpha\right)\right),
\end{align*}
where the last equality was derived from the estimate for $C_2$.

\emph{Case 2.} Suppose that $n$ is odd. Then for any real number $x$ we have $\Phi_n(-x) = \Phi_{2n}(x)$, so
\begin{align*}
C_3 & = 2\int\limits_{0}^{+\infty}\frac{\sinh\theta d\theta}{e^{(\varphi(n)/2)\alpha\theta}\Phi_n(-e^{-\theta})^{\alpha}}\\
& = 2\int\limits_{0}^{+\infty}\frac{\sinh\theta d\theta}{e^{(\varphi(n)/2)\alpha\theta}\Phi_{2n}(e^{-\theta})^{\alpha}}\\
& \leq 2^{1 + \alpha}\exp\left(\frac{d(n)^2\log n}{2}\alpha\right) \int\limits_0^{+\infty}\frac{\sinh\theta d\theta}{e^{(\varphi(n)/2)\alpha\theta}\left(1 - e^{-2n\theta}\right)^{\alpha}}\\
& \leq 2^{1 + \alpha}\exp\left(\frac{d(n)^2\log n}{2}\alpha\right) \int\limits_0^{+\infty}\frac{\sinh\theta d\theta}{e^{(\varphi(n)/2)\alpha\theta}\left(1 - e^{-n\theta}\right)^{\alpha}}\\
& = 2^{\alpha}\exp\left(\frac{d(n)^2\log n}{2}\alpha\right)\left(\frac{1}{n}B\left(\frac{\varphi(n)\alpha - 2}{2n}, 1 - \alpha\right) - \frac{1}{n}B\left(\frac{\varphi(n)\alpha + 2}{2n}, 1 - \alpha\right)\right),
\end{align*}
where the last equality was derived from the estimate for $C_2$.

We conclude that
\small
\begin{align*}
\int\limits_{-\infty}^{+\infty}\frac{dx}{|\Psi_n(x)|^{\alpha}}
& = C_1 + C_2 + C_3\\
& \leq 2^{\alpha}\exp\left(\frac{d(n)^2\log n}{2}\alpha\right) \times\\
& \times \left(4 + 2^{1-\alpha}B\left(\frac{1 - \alpha}{2}, \frac{1}{2}\right) - 2\pi + \frac{2}{n}B\left(\frac{\varphi(n)\alpha - 2}{2n}, 1 - \alpha\right) - \frac{2}{n}B\left(\frac{\varphi(n)\alpha + 2}{2n}, 1 - \alpha\right)\right).
\end{align*}
\normalsize
\end{proof}

\section{Proof of Corollary \ref{cor:main3}} \label{sec:pi}

In view of the identity $\sin\left(\frac{2\pi}{n}\right) = \cos\left(\frac{2\pi(n - 4)}{4n}\right)$, we can follow the argument of Niven \cite[III.4]{niven} and consider the following five cases.
\begin{enumerate}[(1)]
\item If $n$ is odd, then the fraction $\frac{n - 4}{4n}$ is in lowest terms. Thus $2\cos\left(\frac{2\pi(n - 4)}{4n}\right)$ is conjugate to $2\cos\left(\frac{2\pi}{4n}\right)$ and $c(n) = 4n$.

\item If $n \equiv 2 \pmod 4$, then $\frac{n - 4}{4n}$ reduces to a fraction with denominator $2n$. Thus $2\cos\left(\frac{2\pi(n - 4)}{4n}\right)$ is conjugate to $2\cos\left(\frac{2\pi}{2n}\right)$ and $c(n) = 2n$.

\item If $n \equiv 0 \pmod 8$, then $\frac{n - 4}{4n}$ reduces to a fraction with denominator $n$. Thus $2\cos\left(\frac{2\pi(n - 4)}{4n}\right)$ is conjugate to $2\cos\left(\frac{2\pi}{n}\right)$ and $c(n) = n$.

\item If $n \equiv 12 \pmod{16}$, then $\frac{n-4}{4n}$ reduces to a fraction with denominator $n/2$. Thus $2\cos\left(\frac{2\pi(n - 4)}{4n}\right)$ is conjugate to $2\cos\left(\frac{2\pi}{n/2}\right)$ and $c(n) = n/2$.

\item If $n \equiv 4 \pmod{16}$, then $\frac{n - 4}{4n}$ reduces to a fraction with denominator $n/4$. Thus $2\cos\left(\frac{2\pi(n - 4)}{4n}\right)$ is conjugate to $2\cos\left(\frac{2\pi}{n/4}\right)$ and $c(n) = n/4$.
\end{enumerate}
A combination of this result with Theorem \ref{thm:main3} yields \mbox{$\lim\limits_{n \rightarrow \infty} A_{\Pi_n} = \lim\limits_{n \rightarrow \infty} A_{\Psi_n} = \frac{16}{3}$}.

\section{Upper and Lower Bounds on $A_{T_n}$} \label{sec:Tn}

In this section we prove the following result.

\begin{lem} \label{lem:chebyshev1-area}
For any $n \in \mathbb N$ and $\alpha \in \mathbb R$ such that $n \geq 3$ and $2/n \leq \alpha < 1$,
$$
2 + \frac{2}{n^2\alpha^2 - 1} < \int_{-\infty}^{+\infty}\frac{dx}{|T_n(x)|^\alpha} < 2 + \frac{2^{1 + \alpha}}{n^2\alpha^2 - 1} + B\left(\frac{1 - \alpha}{2}, \frac{1}{2}\right) - \pi.
$$
\end{lem}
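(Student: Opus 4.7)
The plan is to mimic the structure of the proof of Lemma \ref{lem:psi-area}, but in a considerably simpler form thanks to the defining identities $T_n(\cos\theta)=\cos(n\theta)$ and $T_n(\cosh\theta)=\cosh(n\theta)$, together with the parity $|T_n(-x)|=|T_n(x)|$. I would first split the integral at $\pm 1$ and use the substitutions $x=\cos\theta$ on $[-1,1]$ and $x=\cosh\theta$ on $[1,+\infty)$ (the integral over $(-\infty,-1]$ equals that over $[1,+\infty)$ by parity), to obtain
$$\int_{-\infty}^{+\infty}\frac{dx}{|T_n(x)|^{\alpha}} = \underbrace{\int_{0}^{\pi}\frac{\sin\theta\,d\theta}{|\cos(n\theta)|^{\alpha}}}_{I_1} + 2\underbrace{\int_{0}^{+\infty}\frac{\sinh\theta\,d\theta}{\cosh(n\theta)^{\alpha}}}_{I_2}.$$

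For the lower bound I would use $|\cos(n\theta)|\leq 1$ to get $I_1\geq\int_{0}^{\pi}\sin\theta\,d\theta=2$, and $\cosh(n\theta)\leq e^{n\theta}$ together with the elementary identity $\int_{0}^{+\infty}\sinh\theta\,e^{-s\theta}\,d\theta = 1/(s^{2}-1)$ (valid for $s>1$) applied with $s=n\alpha$ to obtain $2I_2\geq 2/(n^{2}\alpha^{2}-1)$. Summing gives the claimed lower bound.

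For the upper bound, the complementary estimate $\cosh(n\theta)\geq e^{n\theta}/2$ yields $2I_2\leq 2^{1+\alpha}/(n^{2}\alpha^{2}-1)$ via the same elementary formula. For $I_1$, I would borrow the ``add and subtract'' manoeuvre from the proof of Lemma \ref{lem:psi-area}:
$$I_1 \;=\; 2 + \int_{0}^{\pi}\sin\theta\bigl(|\cos(n\theta)|^{-\alpha}-1\bigr)d\theta \;\leq\; 2 + \int_{0}^{\pi}\frac{d\theta}{|\cos(n\theta)|^{\alpha}}-\pi,$$
where we used that the bracketed factor is nonnegative and $\sin\theta\leq 1$. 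The substitution $\phi=n\theta$, together with the $\pi$-periodicity of $|\cos\phi|^{-\alpha}$ and the symmetry of $\cos$ about $\pi/2$, then reduces the remaining integral to $2\int_{0}^{\pi/2}\cos^{-\alpha}\phi\,d\phi$, which equals $B\bigl(\tfrac{1}{2},\tfrac{1-\alpha}{2}\bigr)$ by the trigonometric form (\ref{eq:beta}) of the beta function.

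No step is genuinely difficult; the only point deserving mild care is the reduction of the trigonometric integral to a single beta value, which rests on the fact that $[0,n\pi]$ consists of an integer number of periods of $|\cos|^{-\alpha}$. The hypothesis $2/n\leq\alpha<1$ is used twice: the lower inequality $n\alpha\geq 2>1$ is needed for convergence of the exponential integral (and positivity of the denominator $n^{2}\alpha^{2}-1$), while $\alpha<1$ is needed for integrability of $|\cos\phi|^{-\alpha}$ near $\pi/2$, hence for the beta-function evaluation to be legitimate.
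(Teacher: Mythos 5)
Your proof is correct and follows essentially the same route as the paper's: the same split of the integral at $x=\pm1$, the same two-sided bound on $T_n$ for $|x|\ge 1$ (your $e^{n\theta}/2<\cosh(n\theta)<e^{n\theta}$ is exactly the paper's $(x+\sqrt{x^2-1})^n/2<T_n(x)<(x+\sqrt{x^2-1})^n$ under $x=\cosh\theta$), and the same add-and-subtract device reducing the inner piece to $B\left(\tfrac{1-\alpha}{2},\tfrac{1}{2}\right)-\pi$. The only differences are cosmetic choices of parametrization ($x=\cosh\theta$ and $x=\cos\theta$ in place of the paper's $x=\csc\theta$ and $x=\cos(\theta/n)$), which change nothing of substance.
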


Let us see why Theorem \ref{thm:main1} directly follows from Lemma \ref{lem:chebyshev1-area}.

\begin{proof}[Proof of Theorem \ref{thm:main1}]
Since for an arbitrary binary form $F$ of positive degree $n$ the area bounded by the curve $|F(x, y)| = 1$ can be computed via (\ref{eq:AF-formula}), the inequalities (\ref{eq:ATn-bounds}) follow from Lemma \ref{lem:chebyshev1-area} if we take $\alpha = 2/n$.

To see that $\lim\limits_{n \rightarrow \infty} A_{T_n} = \frac{8}{3}$, it is sufficient to demonstrate that the limit of $B\left(\frac{1}{2} - \frac{1}{n}, \frac{1}{2}\right)$ as $n$ approaches infinity is equal to $\pi$. Recall that the gamma function (\ref{eq:gamma}) satisfies $\Gamma(1) = 1$ and $\Gamma(1/2) = \sqrt{\pi}$. In view of (\ref{eq:beta-gamma}), we see that
$$
\lim\limits_{n \rightarrow \infty}B\left(\frac{1}{2} - \frac{1}{n}, \frac{1}{2}\right) = B\left(\frac{1}{2}, \frac{1}{2}\right) = \frac{\Gamma(1/2)^2}{\Gamma(1)} = \pi.
$$
\end{proof}

We will now turn our attention to the proof of Lemma \ref{lem:chebyshev1-area}.

\begin{proof}[Proof of Lemma \ref{lem:chebyshev1-area}]
Since the function $|T_n(x)|$ is even, we find that
$$
\int_{-\infty}^{+\infty}\frac{dx}{|T_n(x)|^\alpha} = C_1 + C_2,
$$
where
$$
C_1 = 2\int\limits_{1}^{+\infty}\frac{dx}{T_n(x)^\alpha}, \,\,\, C_2 = 2\int\limits_{0}^{1}\frac{dx}{|T_n(x)|^\alpha},
$$
We will now derive upper and lower bounds on $C_1$ and $C_2$.

First, we determine upper and lower bounds on $C_1$. Recall that
$$
T_n(x) = \frac{(x + \sqrt{x^2 - 1})^n + (x - \sqrt{x^2 - 1})^n}{2}
$$
for all $x$ such that $|x| \geq 1$. Hence
\begin{equation} \label{eq:Tn-bound}
\frac{(x + \sqrt{x^2 - 1})^n}{2} < T_n(x) < (x + \sqrt{x^2 - 1})^n
\end{equation}
for all $x > 1$. Using the lower bound in (\ref{eq:Tn-bound}), as well as the change of variables $x = \csc \theta$, we find that
\begin{align*}
C_1
& = 2\int\limits_{1}^{+\infty}\frac{dx}{T_n(x)^\alpha}\\
& < 2^{\alpha + 1}\int\limits_1^{+\infty}\frac{dx}{(x + \sqrt{x^2 - 1})^{n\alpha}}\\
& = 2^{\alpha + 1}\int\limits_0^{\pi/2}\frac{(\cos \theta)(\sin \theta)^{n \alpha - 2}d\theta}{(1 + \cos \theta)^{n\alpha}}\\
& = 2^{\alpha + 1}\left.\frac{(\sin \theta)^{n\alpha - 1}(n\alpha\cos \theta + 1)}{(n^2\alpha^2 - 1)(1 + \cos \theta)^{n\alpha}}\right|_{0}^{\pi/2}\\
& = \frac{2^{\alpha + 1}}{n^2\alpha^2 - 1}
\end{align*}
%
%
A lower bound on $C_1$ can be obtained analogously. As a result, we get
$$
\frac{2}{n^2\alpha^2 - 1} < C_1 < \frac{2^{1 + \alpha}}{n^2\alpha^2 - 1}.
$$

Next, we determine upper and lower bounds on $C_2$. We apply the substitution $x = \cos(\theta/n)$ and make use of the identity $\cos \theta = T_n(\cos(\theta/n))$ as follows:\footnote{The author is grateful to Fedor Petrov for recognizing the third equality. See \url{https://mathoverflow.net/questions/345820/an-integral-of-sinx-cosnx-2-n-from-pi-to-pi}.}
\begin{align*}
C_2
& = 2\int\limits_0^1\frac{dx}{|T_n(x)|^\alpha}\\
& = \frac{2}{n}\int\limits_0^{\pi n/2}\frac{\sin(\theta/n)d\theta}{|\cos \theta|^\alpha}\\
& = \frac{2}{n}\int\limits_0^{\pi n/2}\sin(\theta/n)d\theta + \varepsilon_n,
\end{align*}
where
$$
\varepsilon_n = \frac{2}{n}\int\limits_0^{\pi n/2}\sin(\theta/n)\left(|\cos \theta|^{-\alpha} - 1\right)d\theta.
$$
Note that $\varepsilon_n > 0$. Since
$$
\frac{2}{n}\int\limits_0^{\pi n/2}\sin(\theta/n)d\theta = 2\int\limits_0^{\pi/2}\sin\theta d\theta = 2,
$$
we have
$$
C_2 = \frac{2}{n}\int\limits_0^{\pi n/2}\sin(\theta/n)d\theta + \varepsilon_n > 2.
$$

It remains to find an upper bound on $C_2$. Note that
\begin{align*}
\varepsilon_n & = \frac{2}{n}\int\limits_0^{\pi n/2}\sin(\theta/n)\left(|\cos \theta|^{-\alpha} - 1\right)d\theta\\
& \leq \frac{2}{n}\int\limits_0^{\pi n/2}|\cos \theta|^{-\alpha}d\theta - \pi\\
& = 2\int\limits_0^{\pi/2}(\cos \theta)^{-\alpha}d\theta - \pi\\
& = B\left(\frac{1 - \alpha}{2}, \frac{1}{2}\right) - \pi,
\end{align*}
where the last equality follows from (\ref{eq:beta}). We conclude that
$$
2 < C_2 \leq 2 + B\left(\frac{1 - \alpha}{2}, \frac{1}{2}\right) - \pi.
$$
\end{proof}

\section{Upper and Lower Bounds on $A_{U_n}$} \label{sec:Un}

In this section we prove the following result.

\begin{lem} \label{lem:chebyshev2-area}
For any $n \in \mathbb N$ and $\alpha \in \mathbb R$ such that $n \geq 3$ and $2/n \leq \alpha < 1$,
\begin{align*}
\int\limits_{-\infty}^{+\infty}\frac{dx}{|U_n(x)|^\alpha} & > B\left(\frac{2 + \alpha}{2}, \frac{1}{2}\right) + \frac{2}{n^2\alpha^2 - 1}(n + 1)^{-\alpha}\\
\int\limits_{-\infty}^{+\infty}\frac{dx}{|U_n(x)|^\alpha} & < B\left(\frac{2 + \alpha}{2}, \frac{1}{2}\right) + \frac{2}{n^2\alpha^2 - 1} + B\left(\frac{1 - \alpha}{2}, \frac{1}{2}\right) - \pi\notag
\end{align*}
\end{lem}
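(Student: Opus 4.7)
The plan is to mirror the structure of the proof of Lemma \ref{lem:chebyshev1-area}. Since $|U_n(x)|$ is even in $x$, I would decompose
$$\int_{-\infty}^{+\infty}\frac{dx}{|U_n(x)|^\alpha} = C_1 + C_2, \qquad C_1 = 2\int_1^{+\infty}\frac{dx}{U_n(x)^\alpha}, \qquad C_2 = 2\int_0^1\frac{dx}{|U_n(x)|^\alpha},$$
and bound each piece separately.

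For $C_1$, I would exploit the closed-form representation $U_n(x) = (y^{n+1} - y^{-(n+1)})/(y - y^{-1})$, where $y = x + \sqrt{x^2 - 1} \ge 1$, which expands as the geometric sum $U_n(x) = \sum_{k=0}^{n} y^{n-2k}$. This immediately yields the two-sided bound $y^n \le U_n(x) \le (n+1)\,y^n$. Using the substitution $x = \cosh t$ (so that $y = e^t$ and $dx = \sinh t\,dt$), I would compute
$$\int_1^{+\infty}\frac{dx}{y^{n\alpha}} = \int_0^{+\infty} e^{-n\alpha t}\sinh t\, dt = \frac{1}{n^2\alpha^2 - 1},$$
which converges because $n\alpha \ge 2 > 1$, and so
$$\frac{2(n+1)^{-\alpha}}{n^2\alpha^2 - 1} \le C_1 \le \frac{2}{n^2\alpha^2 - 1}.$$

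For $C_2$, I would use the identity $U_n(\cos\phi) = \sin((n+1)\phi)/\sin\phi$ together with the substitutions $x = \cos\phi$ followed by $\theta = (n+1)\phi$ to rewrite
$$C_2 = \frac{2}{n+1}\int_0^{(n+1)\pi/2}\frac{(\sin(\theta/(n+1)))^{1+\alpha}}{|\sin\theta|^\alpha}\,d\theta,$$
and then split $|\sin\theta|^{-\alpha} = 1 + (|\sin\theta|^{-\alpha} - 1)$ to separate a main term from a nonnegative remainder $\varepsilon$. Undoing the substitution $\phi = \theta/(n+1)$ in the main term gives $2\int_0^{\pi/2}(\sin\phi)^{1+\alpha}\,d\phi = B\!\left(\tfrac{2+\alpha}{2}, \tfrac{1}{2}\right)$ by (\ref{eq:beta}), which already yields $C_2 > B((2+\alpha)/2, 1/2)$ and hence the lower bound. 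For the upper bound on $\varepsilon$, I would use $(\sin(\theta/(n+1)))^{1+\alpha} \le 1$ together with the $\pi$-periodicity and within-period symmetry of $|\sin\theta|^{-\alpha}$, giving $\int_0^{(n+1)\pi/2}|\sin\theta|^{-\alpha}d\theta = (n+1)\int_0^{\pi/2}(\sin\phi)^{-\alpha}d\phi$, so that
$$\varepsilon \le \tfrac{2}{n+1}\int_0^{(n+1)\pi/2}(|\sin\theta|^{-\alpha} - 1)\,d\theta = B\!\left(\tfrac{1-\alpha}{2}, \tfrac{1}{2}\right) - \pi.$$
Summing the bounds on $C_1$ and $C_2$ yields the asserted inequalities.

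The main new bookkeeping hurdle compared with Lemma \ref{lem:chebyshev1-area} is the extra factor $(\sin\phi)^{1+\alpha}$ in $C_2$, which produces a genuine main term $B((2+\alpha)/2, 1/2)$ rather than the bare constant $2$ seen in the $T_n$ case; similarly, the $(n+1)^{-\alpha}$ factor in the lower bound for $C_1$ comes from the $(n+1)$-term length of the geometric-sum representation of $U_n$. Convergence of both integrals is guaranteed by $n\alpha > 1$ (from $\alpha \ge 2/n$) and $\alpha < 1$, so once these modifications are tracked correctly, the rest of the argument proceeds in direct parallel with the proof of Lemma \ref{lem:chebyshev1-area}.
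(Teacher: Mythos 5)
Your proposal is correct and follows essentially the same route as the paper: the same even-function decomposition into $C_1 + C_2$, the same two-sided bound $y^n \le U_n(x) \le (n+1)y^n$ coming from the closed form of $U_n$ on $[1,\infty)$ (the paper phrases it as $(1+\cos\theta)^n < (\sin\theta)^n U_n(\csc\theta) < (n+1)(1+\cos\theta)^n$ after the substitution $x=\csc\theta$, which is the same estimate in trigonometric rather than hyperbolic coordinates), and the identical treatment of $C_2$ via $U_n(\cos\phi)=\sin((n+1)\phi)/\sin\phi$, the split into a main beta-function term plus a positive remainder $\varepsilon_n$, and the bound $\varepsilon_n \le B\left(\frac{1-\alpha}{2},\frac{1}{2}\right)-\pi$. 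The only differences are cosmetic (evaluating $\int_0^\infty e^{-n\alpha t}\sinh t\,dt$ directly instead of the paper's explicit antiderivative in $\theta$).
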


Let us see why Theorem \ref{thm:main2} directly follows from Lemma \ref{lem:chebyshev2-area}.

\begin{proof}[Proof of Theorem \ref{thm:main2}]
Since for an arbitrary binary form $F$ of positive degree $n$ the area bounded by the curve $|F(x, y)| = 1$ can be computed via (\ref{eq:AF-formula}), the inequalities (\ref{eq:AUn-bounds}) follow from Lemma \ref{lem:chebyshev2-area} if we take $\alpha = 2/n$.

To see that $\lim\limits_{n \rightarrow \infty} A_{U_n} = \frac{8}{3}$, it is sufficient to demonstrate that the limits of $B\left(\frac{1}{2} - \frac{1}{n}, \frac{1}{2}\right)$ and $B\left(1 + \frac{1}{n}, \frac{1}{2}\right)$ as $n$ approaches infinity are equal to $\pi$ and $2$, respectively. Recall that the gamma function (\ref{eq:gamma}) satisfies $\Gamma(1) = 1$ and $\Gamma(1/2) = \sqrt{\pi}$. In view of (\ref{eq:beta-gamma}), we see that
$$
\lim\limits_{n \rightarrow \infty}B\left(\frac{1}{2} - \frac{1}{n}, \frac{1}{2}\right) = B\left(\frac{1}{2}, \frac{1}{2}\right) = \frac{\Gamma(1/2)^2}{\Gamma(1)} = \pi
$$
and
$$
\lim\limits_{n \rightarrow \infty}B\left(1 + \frac{1}{n}, \frac{1}{2}\right) = B\left(1, \frac{1}{2}\right) = \frac{\Gamma(1)\Gamma(1/2)}{\Gamma(3/2)} = \frac{\Gamma(1)\Gamma(1/2)}{(1/2)\Gamma(1/2)} = 2,
$$
where the second-to-last equality follows from the functional equation (\ref{eq:functional-equation}).
\end{proof}

We will now turn our attention to the proof of Lemma \ref{lem:chebyshev2-area}.

\begin{proof}[Proof of Lemma \ref{lem:chebyshev2-area}]
Since the function $|U_n(x)|$ is even, we find that
$$
\int_{-\infty}^{+\infty}\frac{dx}{|U_n(x)|^\alpha} = C_1 + C_2,
$$
where
$$
C_1 = 2\int\limits_{1}^{+\infty}\frac{dx}{U_n(x)^{\alpha}}, \,\,\, C_2 = 2\int\limits_{0}^{1}\frac{dx}{|U_n(x)|^{\alpha}}.
$$
We will now derive upper and lower bounds on $C_1$ and $C_2$.

Put $C_1 = 2\int\limits_{1}^{+\infty}\frac{dx}{U_n(x)^{\alpha}}, \,\,\, C_2 = 2\int\limits_{0}^{1}\frac{dx}{|U_n(x)|^{\alpha}}$, so that $\int\limits_{-\infty}^{+\infty}\frac{dx}{|U_n(x)|^\alpha} = C_1 + C_2$.

First, we determine upper and lower bounds on $C_1$. Recall that
$$
U_n(x) = \frac{\left(x + \sqrt{x^2 - 1}\right)^{n + 1} - \left(x - \sqrt{x^2 - 1}\right)^{n + 1}}{2\sqrt{x^2 - 1}}
$$
for all $x$ such that $|x| \geq 1$.
%
Thus for every $\theta \in [0, \pi/2]$ we have
\begin{align*}
(\sin \theta)^nU_n(\csc \theta)
& = 
\frac{(1 + \cos \theta)^{n + 1} - (1 - \cos \theta)^{n + 1}}{2\cos \theta}\\
& =
\frac{\left(2\cos^2\frac{\theta}{2}\right)^{n + 1} - \left(2\sin^2\frac{\theta}{2}\right)^{n + 1}}{2\cos \theta}\\
& = 2^n\sum\limits_{k = 0}^n\left(\sin^2\frac{\theta}{2}\right)^k\left(\cos^2\frac{\theta}{2}\right)^{n - k}.
\end{align*}
Since
$$
\left(\cos^2\frac{\theta}{2}\right)^n < \sum\limits_{k = 0}^n\left(\sin^2\frac{\theta}{2}\right)^k\left(\cos^2\frac{\theta}{2}\right)^{n - k} < (n + 1)\left(\cos^2\frac{\theta}{2}\right)^n
$$
for every $\theta \in (0, \pi/2]$, we find that
\begin{equation} \label{eq:Un-bound}
\left(1 + \cos \theta\right)^n < (\sin \theta)^nU_n(\csc\theta) < (n + 1)\left(1 + \cos \theta\right)^n
\end{equation}
for every $\theta \in (0, \pi/2]$.


 Using the change of variables $x = \csc \theta$, as well as the lower bound in (\ref{eq:Un-bound}), we find that
\begin{align*}
C_1
& = 2\int\limits_1^{+\infty}\frac{dx}{U_n(x)^\alpha}\\
& = 2\int\limits_0^{\pi/2}\frac{\cos\theta d\theta}{(\sin \theta)^2U_n(\csc\theta)^\alpha}\\
& < 2\int\limits_0^{\pi/2}\frac{\cos\theta (\sin \theta)^{n\alpha - 2} d\theta}{(1 + \cos\theta)^{n\alpha}}\\
& = 2\left.\frac{(\sin \theta)^{n\alpha - 1}(n\alpha\cos \theta + 1)}{(n^2\alpha^2 - 1)(1 + \cos \theta)^{n\alpha}}\right|_{0}^{\pi/2}\\
& = \frac{2}{n^2\alpha^2 - 1}.
\end{align*}
A lower bound on $C_1$ can be obtained analogously. As a result, we get
$$
\frac{2}{n^2\alpha^2 - 1}(n + 1)^{-\alpha} < C_1 <  \frac{2}{n^2\alpha^2 - 1}.
$$

It remains to determine upper and lower bounds on $C_2$. We apply the substitution $x = \cos\left(\theta/(n + 1)\right)$ and make use of the identity
$$
U_n\left(\cos\left(\frac{\theta}{n + 1}\right)\right) = \frac{\sin \theta}{\sin\left(\frac{\theta}{n + 1}\right)}
$$
as follows:
\begin{align*}
C_2
& = 2\int\limits_0^1\frac{dx}{|U_n(x)|^\alpha}\\
& = \frac{2}{n + 1}\int\limits_0^{\pi (n + 1)/2}\sin\left(\frac{\theta}{n + 1}\right)^{1 + \alpha}|\sin \theta|^{-\alpha}d\theta\\
& = \frac{2}{n + 1}\int\limits_0^{\pi (n + 1)/2}\sin\left(\frac{\theta}{n + 1}\right)^{1 + \alpha}d\theta + \varepsilon_n\\
& = 2\int\limits_0^{\pi/2}(\sin\theta)^{1 + \alpha} d\theta + \varepsilon_n\\
& = B\left(\frac{2 + \alpha}{2}, \frac{1}{2}\right) + \varepsilon_n,
\end{align*}
where the last equality follows from (\ref{eq:beta}) and
$$
\varepsilon_n = \frac{2}{n + 1}\int\limits_0^{\pi(n + 1)/2}\sin\left(\frac{\theta}{n + 1}\right)^{1 + \alpha}\left(|\sin \theta|^{-\alpha} - 1\right)d\theta.
$$
Since $\varepsilon_n > 0$, we obtain the lower bound $C_2 > B\left(\frac{2 + \alpha}{2}, \frac{1}{2}\right)$.

To find an upper bound on $C_2$, note that
\begin{align*}
\varepsilon_n
& \leq \frac{2}{n + 1}\int\limits_0^{\pi(n + 1)/2}\left(|\sin \theta|^{-\alpha} - 1\right)d\theta\\
& = 2\int\limits_0^{\pi/2}(\sin \theta)^{-\alpha} - \pi\\
& = B\left(\frac{1 - \alpha}{2}, \frac{1}{2}\right) - \pi,
\end{align*}
where the last equality follows from (\ref{eq:beta}). We conclude that
$$
B\left(\frac{2 + \alpha}{2}, \frac{1}{2}\right) < C_2 \leq B\left(\frac{2 + \alpha}{2}, \frac{1}{2}\right) + B\left(\frac{1 - \alpha}{2}, \frac{1}{2}\right) - \pi.
$$
\end{proof}

\section{Bean's Conjecture} \label{sec:bean}

We conclude this article with the proof of Proposition \ref{prop:bean}, which provides further theoretical evidence in support of Conjecture \ref{conj:bean}. 

\begin{prop} \label{prop:bean}
For any integer $n \geq 3$, $Q(T_n) \leq Q(S_n)$ and $Q(U_n) \leq Q(S_n)$. Furthermore,
$$
\lim\limits_{n \rightarrow \infty} Q(\Psi_n) = \lim\limits_{n \rightarrow \infty} Q(\Pi_n)  = \lim\limits_{n \rightarrow \infty} Q(T_n) = \lim\limits_{n \rightarrow \infty} Q(U_n) = \frac{16}{3}.
$$
\end{prop}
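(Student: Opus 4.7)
The proof has two main parts: the two inequalities involving $Q(T_n)$, $Q(U_n)$ and $Q(S_n)$, and the four convergence statements. Both rest on the identity $Q(F) = |D_F|^{1/n(n-1)}\,A_F$ together with the classical Chebyshev discriminant formulas
\[
|D_{T_n}| = 2^{(n-1)^2}\, n^n, \qquad |D_{U_n}| = 2^{n^2}(n+1)^{n-2},
\]
which I would derive from the product formula $|D_F| = a_n^{2n-2}\prod_{i<j}(\alpha_i - \alpha_j)^2$ applied to the roots $\cos((2k+1)\pi/2n)$ and $\cos(k\pi/(n+1))$, combined with the standard evaluation $\prod_{k=1}^{n-1} 2\sin(k\pi/n) = n$. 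Taking $n(n-1)$-th roots,
\[
|D_{T_n}|^{1/n(n-1)} = 2^{(n-1)/n}\,n^{1/(n-1)}, \qquad |D_{U_n}|^{1/n(n-1)} = 2^{n/(n-1)}(n+1)^{(n-2)/n(n-1)}.
\]

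To establish $Q(T_n) \leq Q(S_n)$ I would substitute the Bean--Laugesen formula $Q(S_n) = 2^{1-2/n} n^{1/(n-1)} B(\tfrac{1}{2} - \tfrac{1}{n}, \tfrac{1}{2})$ from \eqref{eq:QFn}; the inequality reduces to $A_{T_n} \leq 2^{-1/n}\,B(\tfrac{1}{2} - \tfrac{1}{n}, \tfrac{1}{2})$, and $Q(U_n) \leq Q(S_n)$ reduces to an analogous comparison. For $n = 3$ the reduced inequalities become equalities: each of $T_3, U_3, S_3$ splits over $\mathbb R$ into three distinct linear factors and is therefore $\operatorname{GL}_2(\mathbb R)$-equivalent (up to a scalar) to $xy(x - y)$, giving $Q(T_3) = Q(U_3) = Q(S_3) = 3B(\tfrac{1}{3}, \tfrac{1}{3})$ by Bean's theorem. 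For $n \geq 4$ I would revisit the proofs of Lemmas \ref{lem:chebyshev1-area} and \ref{lem:chebyshev2-area} and retain the exact expressions for the pieces $C_1, C_2$ obtained from the substitutions $x = \csc\theta$ and $x = \cos(\theta/n)$, comparing them directly with the trigonometric integral $B(\tfrac{1-\alpha}{2}, \tfrac{1}{2}) = 2\int_0^{\pi/2}(\sin\theta)^{-\alpha}d\theta$. The main obstacle is that the simple upper bounds stated in those lemmas are not tight enough to close the gap for small $n$, so a sharper integral estimate, possibly supplemented by a finite numerical verification for a small range of $n$, would be required.

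The limits for $T_n$ and $U_n$ are then immediate from the explicit formulas above: $2^{(n-1)/n} \to 2$, $2^{n/(n-1)} \to 2$, $n^{1/(n-1)}, (n+1)^{(n-2)/n(n-1)} \to 1$, and $A_{T_n}, A_{U_n} \to 8/3$ by Theorems \ref{thm:main1} and \ref{thm:main2}, so $Q(T_n), Q(U_n) \to 16/3$. For $\Psi_n$, write $N = \deg \Psi_n = \varphi(n)/2$; since $A_{\Psi_n} \to 16/3$ by Theorem \ref{thm:main3}, it suffices to show $|D_{\Psi_n}|^{1/N(N-1)} \to 1$. I would establish this by differentiating the Lehmer identity $\Phi_n(x) = x^{\varphi(n)/2}\Psi_n(x + x^{-1})$ (rearranged from Lemma \ref{lem:psi}) at each primitive $n$-th root of unity $\zeta_n^k$, pairing complex conjugates and using $|1 - \zeta_n^{-2k}|^2 = 4\sin^2(2\pi k/n)$ to deduce
\[
|D_{\Psi_n}|^2 = \frac{|D_{\Phi_n}|}{4^{\varphi(n)/2}\displaystyle\prod_{\substack{1 \leq k < n/2 \\ \gcd(k,n) = 1}} \sin^2(2\pi k/n)}.
\]
Combining the classical evaluation $|D_{\Phi_n}| = n^{\varphi(n)}/\prod_{p\mid n}p^{\varphi(n)/(p-1)}$ with the elementary bound $|\sin(2\pi k/n)| \geq \sin(\pi/n) > 2/n$ for $k$ in the range above then yields $\log|D_{\Psi_n}| = O(\varphi(n)\log n)$. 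Dividing by $N(N-1) = \Theta(\varphi(n)^2)$ and invoking the lower bound \eqref{eq:phi-lower-bound} (so that $\log n/\varphi(n) \to 0$), we obtain $|D_{\Psi_n}|^{1/N(N-1)} \to 1$ and hence $Q(\Psi_n) \to 16/3$. The statement for $\Pi_n$ follows immediately from Corollary \ref{cor:main3}, since $\Pi_n = \Psi_{c(n)}$ with $c(n) \to \infty$.
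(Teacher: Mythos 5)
Your proposal is correct in substance and follows the paper's overall architecture — discriminant formulas for $T_n$ and $U_n$, the reduction of $Q(T_n)\leq Q(S_n)$ to a comparison of $A_{T_n}$ with $2^{-1/n}B\left(\frac{1}{2}-\frac{1}{n},\frac{1}{2}\right)$ (which is exactly the content of (\ref{eq:QTn-bound}) against (\ref{eq:QFn})), the observation that $n=3$ gives equality because $T_3$, $U_3$, $S_3$ all split into three distinct real linear factors, the limits $|D_{T_n}|^{1/n(n-1)},|D_{U_n}|^{1/n(n-1)}\to 2$ combined with $A_{T_n},A_{U_n}\to\frac{8}{3}$, and the reduction of $\Pi_n$ to $\Psi_{c(n)}$. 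Two remarks. First, for the inequalities you stop at a plan ("sharper estimate or finite numerical verification would be required"); the paper's execution is precisely a numerical check of $Q(T_n),Q(U_n),Q(S_n)$ for $3\leq n\leq 9$ (Table \ref{tab:table2}) together with the observation that for $n\geq 10$ the upper bounds (\ref{eq:QTn-bound}) and (\ref{eq:QUn-bound}) already fall below $2^{-\frac{n-1}{n}}n^{-\frac{1}{n-1}}Q(S_n)$ and $2^{-\frac{n}{n-1}}(n+1)^{-\frac{n-2}{n(n-1)}}Q(S_n)$ respectively — no sharper integral estimate is needed, only the finite check you anticipated. Second, your treatment of $D_{\Psi_n}$ is genuinely different from the paper's and is the more self-contained route: the paper invokes Liang's formula (\ref{eq:Dn}) for the discriminant of $K_n=\mathbb Q\left(2\cos\left(\frac{2\pi}{n}\right)\right)$ together with his power-integral-basis theorem $\mathcal O_{K_n}=\mathbb Z\left[2\cos\left(\frac{2\pi}{n}\right)\right]$ to identify $D_{\Psi_n}=D_{K_n}$, whereas you differentiate Lehmer's identity at the primitive roots of unity to get $|D_{\Psi_n}|^2=|D_{\Phi_n}|\cdot 4^{-\varphi(n)/2}\prod\sin^{-2}\left(\frac{2\pi k}{n}\right)$ and bound the sines below by $\sin\left(\frac{\pi}{n}\right)>\frac{2}{n}$. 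Your relation is correct, avoids a nontrivial citation from algebraic number theory, and delivers the same estimate $\log|D_{\Psi_n}|=O(\varphi(n)\log n)$, which together with (\ref{eq:phi-lower-bound}) is all that the limit $Q(\Psi_n)\to\frac{16}{3}$ requires; the paper's route buys the exact value of $D_{\Psi_n}$, which your bound does not. (Re-deriving Tran's formulas (\ref{eq:discriminants}) from the product formula is likewise feasible but is extra work the paper sidesteps by citation.)
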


\begin{table}[t]
\centering
\begin{tabular}{c | c | c | c | c | c | c | c | c | c | c}
$n$ & $D_{\Psi_n}$ & $A_{\Psi_n}$ & $D_{\Pi_n}$ & $A_{\Pi_n}$ & $D_{S_n}$ & $A_{S_n}$ & $D_{T_n}$ & $A_{T_n}$ & $D_{U_n}$ & $A_{U_n}$\\
\hline
$3$ & 1 & $\infty$ & $2^23$ & $\infty$ & $2^23^3$ & $7.28585$ & $2^4 3^3$ & $5.78286$ & $2^{11}$ & $4.46217$\\
$4$ & $1$ & $\infty$ & $1$ & $\infty$ & $2^2$ & $10.4882$ & $2^{17}$ & $4.30008$ & $2^{16}5^2$ & $3.50332$\\
$5$ & $5$ & $\infty$ & $2^45^3$ & $5.78302$ & $2^{12}5^5$ & $4.55444$ & $2^{16}5^{5}$ & $3.78568$ & $2^{28}3^3$ & $3.19719$\\
$6$ & $1$ & $\infty$ & $2^23$ & $\infty$ & $2^{16}3^6$ & $5.29992$ & $2^{31}3^6$ & $3.52082$ & $2^{36}7^4$ & $3.04985$\\
$7$ & $7^2$ & $8.31171$ & $2^67^5$ & $5.38644$ & $2^{30}7^7$ & $3.99650$ & $2^{36}7^7$ & $3.35841$ & $2^{64}$ & $2.96434$\\
$8$ & $2^3$ & $\infty$ & $2^3$ & $\infty$ & $2^{24}$ & $6.48467$ & $2^{73}$ & $3.24832$ & $2^{64}3^{12}$ & $2.90894$\\
$9$ & $3^4$ & $7.64379$ & $2^63^9$ & $5.63543$ & $2^{56}3^{18}$ & $3.75495$ & $2^{64}3^{18}$ & $3.16867$ & $2^{88}5^7$ & $2.87035$
\end{tabular}
\caption{Invariants associated with $\Psi_n$, $\Pi_n$, $S_n$, $T_n$ and $U_n$ for $n = 3, 4, \ldots, 9$.}
\label{tab:table1}
\end{table}

\begin{proof}
Since $Q(F_n^*) = Q(S_n)$, we prove our statement with $F_n^*$ in place of $S_n$.

Let $n$ be an integer such that $n \geq 3$. According to Tran \cite{tran}, the discriminants of $T_n$ and $U_n$ are given by
\begin{equation} \label{eq:discriminants}
D_{T_n} = 2^{(n - 1)^2}n^n \quad \text{and} \quad D_{U_n} = 2^{n^2}(n + 1)^{n - 2}.
\end{equation}
Along with Theorems \ref{thm:main1} and \ref{thm:main2}, these equalities yield the upper bounds
\small
\begin{equation} \label{eq:QTn-bound}
2^{-\frac{n - 1}{n}}n^{-\frac{1}{n - 1}}Q(T_n) < \frac{8}{3} + \frac{2}{3}\left(\sqrt[n]{4} - 1\right) + B\left(\frac{1}{2} - \frac{1}{n}, \frac{1}{2}\right) - \pi
\end{equation}
\normalsize
and
\small
\begin{equation} \label{eq:QUn-bound}
2^{-\frac{n}{n - 1}}(n + 1)^{-\frac{n - 2}{n(n - 1)}} Q(U_n) < \frac{8}{3} + \left(B\left(1 + \frac{1}{n}, \frac{1}{2}\right) - 2\right) + B\left(\frac{1}{2} - \frac{1}{n}, \frac{1}{2}\right) - \pi
\end{equation}
\normalsize

Now, the values of $Q(F_n^*) = Q(S_n)$, $Q(T_n)$ and $Q(U_n)$ for $n = 3, 4, \ldots, 9$ are given in \mbox{Table \ref{tab:table2}}. The integrals associated with $A_{S_n}$, $A_{T_n}$ and $A_{U_n}$ were approximated with Mathematica. Notice that $Q(T_3) = Q(U_3) = Q(S_3) = 3B\left(\frac{1}{3}, \frac{1}{3}\right)$, because $S_3$, $T_3$ and $U_3$ are all equivalent under $\operatorname{GL}_2(\mathbb R)$ to \mbox{$xy(x - y)$}. From Table \ref{tab:table1} it is clear that the statement of \mbox{Corollary \ref{prop:bean}} holds for the aforementioned values of $n$.

For $n \geq 10$, the inequalities $Q(T_n) \leq Q(S_n)$ and $Q(U_n) \leq Q(S_n)$ can be easily verified by combining the formula (\ref{eq:QFn}) with the upper bounds (\ref{eq:QTn-bound}) and (\ref{eq:QUn-bound}).

The formulas for $D_{T_n}$ and $D_{U_n}$ given in (\ref{eq:discriminants}) imply
$$
\lim\limits_{n \rightarrow \infty}D_{T_n}^{1/n(n - 1)} = \lim\limits_{n \rightarrow \infty}D_{U_n}^{1/n(n - 1)} = 2.
$$
Combining this with the fact that $\lim\limits_{n \rightarrow \infty} A_{T_n} = \lim\limits_{n \rightarrow \infty} A_{U_n} = \frac{8}{3}$, we find that
$$
\lim\limits_{n \rightarrow \infty} Q(T_n) = \lim\limits_{n \rightarrow \infty} D_{T_n}^{\frac{1}{n(n - 1)}}A_{T_n} = \left(\lim\limits_{n \rightarrow \infty} D_{T_n}^{\frac{1}{n(n - 1)}}\right)\cdot\left(\lim\limits_{n \rightarrow \infty} A_{T_n}\right) = \frac{16}{3}.
$$
The same calculation applies with $Q(U_n)$ in place of $Q(T_n)$.

Next, we prove that $\lim\limits_{n \rightarrow \infty}Q(\Psi_n) = \frac{16}{3}$. For a positive integer $n$, let $\omega(n)$ denote the number of distinct prime divisors of $n$, and let \mbox{$n = p_1^{e_1} \cdots p_{\omega(n)}^{e_{\omega(n)}}$} be the prime factorization of $n$. According to Liang \cite{liang}, the discriminant $D_{K_n}$ of the number field $K_n = \mathbb Q\left(2\cos\left(\frac{2\pi}{n}\right)\right)$ can be computed according to the formula
\begin{equation} \label{eq:Dn}
D_{K_n} =
\begin{cases}
2^{(m-1)2^{m - 2} - 1} & \textrm{if $n = 2^m$, $m > 2$,}\\
p^{(mp^m - (m+1)p^{m-1} - 1)/2} & \textrm{if $n = p^m$ or $2p^m$, $p > 2$ prime,}\\
\left(\prod_{i = 1}^{\omega(n)}p_i^{e_i - 1/(p_i - 1)}\right)^{\frac{\varphi(n)}{2}} & \textrm{if $\omega(n) > 1, n \neq 2p^m$.}
\end{cases}
\end{equation}
It was also established by Liang that the ring of integers $\mathcal O_{K_n}$ of $K_n$ has a power integral basis, i.e., $\mathcal O_{K_n} = \mathbb Z\left[2\cos\left(\frac{2\pi}{n}\right)\right]$. Consequently, the discriminant $D_{\Psi_n}$ of $\Psi_n$ is equal to $D_{K_n}$. Since $1 \leq D_{\Psi_n} \leq n^{\varphi(n)/2}$, it follows from (\ref{eq:phi-lower-bound}) and Theorem \ref{thm:main3} that
$$
\lim\limits_{n \rightarrow \infty} Q(\Psi_n) = \frac{16}{3}.
$$
In view of Corollary \ref{cor:main3}, this result also implies \mbox{$\lim\limits_{n \rightarrow \infty}Q(\Pi_n) = \lim\limits_{n \rightarrow \infty}Q(\Psi_n) = \frac{16}{3}$}.
\end{proof}

\section*{Acknowledgements}

The author is grateful to Prof.\ Cameron L.\ Stewart for his numerous suggestions, to Patrick Naylor for many productive conversations, and to the anonymous reviewer for their valuable recommendations.

\end{document}